\documentclass[a4paper,reqno,11pt]{amsart}
\usepackage{amsfonts,amsmath,amsthm,amssymb,stmaryrd}
\newtheorem{theorem}{Theorem}[section]
\newtheorem{lemma}[theorem]{Lemma}

\newtheorem{Remark}[theorem]{Remark}
\newtheorem{Corollary}[theorem]{Corollary}
\numberwithin{equation}{section}
\allowdisplaybreaks

\usepackage[left=1 in, right=1 in,top=1 in, bottom=1 in]{geometry}


\DeclareMathOperator{\diverge}{div} 
 \providecommand{\norm}[1]{\left\Vert#1\right\Vert}

\def\r3{\mathbb{R}^3}

\begin{document}
\title[Decay of NSP]{Decay of the Navier-Stokes-Poisson equations}

\author{Yanjin Wang}
\address{
School of Mathematical Sciences\\
Xiamen University\\
Xiamen, Fujian 361005, China}
\email[Y. J. Wang]{yanjin$\_$wang@xmu.edu.cn}

\thanks{This work was partially supported by National Natural Science Foundation of China-NSAF (No. 10976026)}
\keywords{Navier-Stokes-Poisson equations; Energy method; Optimal decay rates; Sobolev interpolation; Negative Sobolev space}
\subjclass[2000]{35Q30; 76N15}

\begin{abstract}
We establish the time decay rates of the solution to the Cauchy problem for the compressible Navier-Stokes-Poisson system via a refined pure energy method. In particular, the optimal decay rates of the higher-order spatial derivatives of the solution are obtained. The $\Dot{H}^{-s}$($0\le s<3/2$) negative Sobolev norms are shown to be preserved along time evolution and enhance the decay rates. As a corollary, we also obtain the usual $L^p$--$L^2$($1<p\le 2$) type of the optimal decay rates. Compared to the compressible Navier-Stokes system and the compressible irrotational Euler-Poisson system, our results imply that both the dispersion effect of the electric field and the viscous dissipation contribute to enhance the decay rate of the density. Our proof is based on  a family of scaled energy estimates with minimum derivative counts and interpolations among them without linear decay analysis.
\end{abstract}

\maketitle


\section{Introduction}
The dynamic of charged particles of one carrier type (e.g., electrons) in the absence of magnetic effects can be described by the compressible (unipolar) Navier-Stokes-Poisson equations (NSP):
\begin{equation}\label{1NS}
\left\{\begin{array}{lll}
\partial_t\rho+{\rm div}(\rho  u )=0
\\\partial_t(\rho  u) +{\rm div}(\rho  u  \otimes u )+\nabla p(\rho)-\mu\Delta u -(\mu+\lambda)\nabla{\rm div} u =\rho\nabla\Phi
\\\Delta\Phi=\rho-\bar{\rho}
\\(\rho,u)|_{t=0}=(\rho_0,  u _0).
\end{array}\right.
\end{equation}
Here  $\rho(t,x)\ge 0,  u(t,x)$ represent the density and the velocity functions of the electrons respectively, at time $t\ge0$ and position $x\in \mathbb{R}^3$. The self-consistent electric potential $\Phi=\Phi(t,x)$ is coupled with the density through the Poisson equation.  The pressure $p =p (\rho )$ is a smooth function  with
$p '(\rho )>0$ for $\rho >0$. We assume that the constant viscosity coefficients $\mu$ and $\lambda$ satisfy the usual physical conditions
\begin{equation}\label{1viscosity}
\mu>0,\quad \lambda+\frac{2}{3}\mu\ge 0.
\end{equation}
In the motion of the fluid, due to the greater inertia the ions merely provide a constant charged background $\bar{\rho}>0$. For simplicity, we take $\bar{\rho}=1$ and assume that $p'(1)=1$.

The main purpose of this paper is to investigate the influence of the
electric field on the time decay rates of the solution compared to the compressible Navier-Stokes equations (NS).
We first review some previous works on the global existence of the
solutions to the NSP system. For the pressure $p(\rho)=\rho^\gamma$ with $\gamma> {3}/{2}$, the global existence of weak solutions was obtained by \cite{KS2008} when the spatial dimension is three. Later this result  was extended by \cite{TW} to the case $\gamma>1$ when the dimension is two, where the authors introduced an idea to overcome the new difficulty caused by that the Poisson term $\rho\nabla\Phi$ may not be integrable when the $\gamma$ is close to one. The constraint of $\gamma$ is somewhat optimal in the sense of the well-known framework of weak solutions to the NS system \cite{FNP,L}. The global existence of small strong solutions in $H^N$ Sobolev spaces was shown in \cite{LMZ} in  the framework of Matsumura and Nishida \cite{MN1}, while global existence of small solutions in some Besov spaces was obtained in \cite{HL0,TWu}.

The convergence rate of the solutions towards the steady state has been an important problem in the PDE theory. The decay rate of solutions to the NS system has been investigated extensively since the works \cite{MN1,MN2,MN3}, see for instance \cite{M,MN2,P,KS1,K1,KK1,KK2,DUYZ1,DLUY,D1,D2,HZ1,HZ2,LW,WY} and the references therein. When the initial perturbation $\rho_0-1,u_0\in L^p\cap H^N$ with $p\in [1,2]$ (Indeed, in those references $p$ is  near 1 and $N\ge 3$ is a large enough integer for the nonlinear system.), the $L^2$ optimal decay rate of the solution to the NS system is
\begin{equation}
\norm{(\rho-1, u) (t)}_{L^2}
\lesssim(1+t)^{-\frac{3}{2}\left(\frac{1}{p}-\frac{1}{2}\right)}.
\end{equation}
Recently, the decay rate of solutions to the NSP system was investigated in \cite{LMZ,ZLZ,WW,HL}. It is observed that the electric field has significant effects on the large time behavior
of the solution. When the initial perturbation $\rho_0-1,u_0\in L^p\cap H^N$ with $p\in [1,2]$, then the $L^2$ optimal decay rate of the solution to the  NSP system is
\begin{equation}\label{LNSP}
\norm{(\rho-1)(t)}_{L^2}
\lesssim(1+t)^{-\frac{3}{2}\left(\frac{1}{p}-\frac{1}{2}\right)}\ \text{ and }\norm{u  (t)}_{L^2}\lesssim(1+t)^{-\frac{3}{2}\left(\frac{1}{p}-\frac{1}{2}\right)+\frac{1}{2}}.
\end{equation}
This implies that the presence of the electric field slows down the decay rate of the velocity of the NSP system with the factor $1/2$ compared to the NS system. The proof is based on that the NSP system can be transformed into the NS system with a non-local force term
\begin{equation}\label{1NS'}
\left\{\begin{array}{lll}
\partial_t\rho+{\rm div}(\rho  u )=0
\\\partial_t(\rho  u) +{\rm div}(\rho  u  \otimes u )+\nabla p(\rho)-\mu\Delta u -(\mu+\lambda)\nabla{\rm div} u =\rho\nabla\Delta^{-1}(\rho-1)
\\(\rho,u)|_{t=0}=(\rho_0,  u _0).
\end{array}\right.
\end{equation}
 By the detailed analysis of the Fourier transform of the Green function for the linear homogeneous system of \eqref{1NS'}, we may have the following approximation for the Fourier transform of the solution, by refining the estimates (3.3)--(3.4) of \cite{LMZ},
\begin{equation}\label{fourier 1}
 \hat{\varrho}(\xi,t)
  \sim
 \left\{\begin{array}{lll}\displaystyle
 O(1)e^{-(\mu+\frac{1}{2}\lambda)|\xi|^2t}\left(|\hat{\varrho}_0|+|\xi||\hat{u}_0|\right),
   &\displaystyle |\xi|\le \eta,\smallskip\smallskip
   \\
\displaystyle O(1)e^{-R_0t}(|\hat{\varrho}_0|+|\hat{u}_0|),&\displaystyle |\xi|\ge \eta,
\end{array}
\right.
\end{equation}
and
\begin{equation}\label{fourier 2}
 \hat{u}(\xi,t)
  \sim
 \left\{\begin{array}{lll}
\displaystyle O(1)e^{-\mu|\xi|^2t}\left(|\xi|^{-1}{|\hat{\varrho}_0|} +|\hat{u}_0|\right),
   &\displaystyle |\xi|\le \eta,\smallskip\smallskip
   \\
\displaystyle O(1)e^{-R_0t}(|\hat{\varrho}_0|+|\hat{u}_0|),&\displaystyle |\xi|\ge \eta.
\end{array}
\right.
\end{equation}
 Hereafter we may sometimes write $\varrho=\rho-1$. $R_0>0$ is a constant and $\eta>0$ is a small but fixed constant. Then the linear optimal decay rate \eqref{LNSP} follows if $\varrho_0,u_0\in L^p$ with $p\in [1,2]$.

However, in this paper we will give a different (contrary) comprehension of the effect of the electric field on the time decay rates of the solution. The key motivation is that if we take $p=2$ in the time decay rate \eqref{LNSP}, then we should get that the $L^2$ norm of $u$ grows in time at the rate $(1+t)^{{1}/{2}}$! This seems unsuitable since the NSP system is a dissipative system. The reason why this happened is that to derive \eqref{LNSP} with $p=2$ it only assume that $\varrho_0,u_0\in L^2$, but from the point of view of the energy structure of the NSP system it is natural to assume that $\nabla\Phi_0\in L^2$. The linear energy identity of the perturbation form of \eqref{1NS} reads as
\begin{equation}
\frac{1}{2}\frac{d}{dt}\int_{\mathbb{R}^3} |\rho-1|^2+|u|^2+ |\nabla\Phi|^2\,dx +\int_{\mathbb{R}^3}\mu|\nabla
u|^2+(\mu+{\lambda})|{\rm div}u|^2\,dx=0.
\end{equation}
By the Poisson equation, the condition $\nabla\Phi_0\in L^2$ is equivalent to that $\Lambda^{-1}\varrho_0\in L^2$. Motivated by this, we instead assume that $\Lambda^{-1}\varrho_0,u_0\in L^p$ with $p\in [1,2]$, then by \eqref{fourier 1}--\eqref{fourier 2}, we have the following $L^2$ optimal decay rates for the linear NSP system:
\begin{equation}\label{LNSP'}
\norm{(\rho-1)(t)}_{L^2}
\lesssim(1+t)^{-\frac{3}{2}\left(\frac{1}{p}-\frac{1}{2}\right)-\frac{1}{2}}\ \text{ and }\norm{u  (t)}_{L^2}\lesssim(1+t)^{-\frac{3}{2}\left(\frac{1}{p}-\frac{1}{2}\right)}.
\end{equation}
In this sense, the electric field does not slow down but rather enhances the time decay rate of the density with the factor $1/2$! This can be understood well from the physical point of view since we get an additional dispersive effect from the repulsive electric force. This is also consistent with \cite{G} in the study of the compressible Euler-Poisson equations.

In the usual $L^{p}$--$L^{2}$ approach of studying the optimal decay rates of the solutions, it is difficult to show that the $L^{p}$ norm of the solution can be
preserved along time evolution. Motivated by \cite{GT}, using a negative Sobolev space $\dot{H}^{-s}$ ($s\ge 0$) to replace $L^{p}$
space, we developed in \cite{GW} a general energy method of using a family of scaled energy
estimates with minimum derivative counts and interpolations among them (but without linear decay analysis) to prove the optimal decay rate of the dissipative equations in the whole space. An important feature is that the $\dot{H}^{-s}$ norm of the solution is preserved along time evolution. The method was applied to classical
examples in \cite{GW} such as the heat equation, the compressible Navier-Stokes equations and the Boltzmann
equation. In this paper, we will apply this energy method to prove the $L^2$ optimal decay rate of the solution to the NSP system \eqref{1NS}.
\smallskip

\noindent \textbf{Notation.} In this paper, $\nabla ^{\ell }$ with an
integer $\ell \geq 0$ stands for the usual any spatial derivatives of order $%
\ell $. When $\ell <0$ or $\ell $ is not a positive integer, $\nabla ^{\ell }
$ stands for $\Lambda ^{\ell }$ defined by \eqref{1Lambdas}. We use $\dot{H}%
^{s}(\mathbb{R}^{3}),s\in \mathbb{R}$ to denote the homogeneous Sobolev
spaces on $\mathbb{R}^{3}$ with norm $\norm{\cdot}_{\dot{H}^{s}}$ defined by %
\eqref{1snorm}, and we use $H^{s}(\mathbb{R}^{3})$ to denote the usual
Sobolev spaces with norm $\norm{\cdot}_{H^{s}}$ and $L^{p}(\mathbb{R}^{3}),1\leq p\leq
\infty $ to denote the usual $L^{p}$ spaces with norm $%
\norm{\cdot}_{L^{p}}$. We will employ the notation $a\lesssim b$ to mean
that $a\leq Cb$ for a universal constant $C>0$ that only depends on the parameters coming from the problem, and the indexes $N$ and $s$ coming from the regularity on the data. We also use $C_0$ for a positive constant depending additionally on the initial data.
\smallskip

Our main results are stated in the following theorem.
\begin{theorem}\label{1mainth}
Assume that $\rho_{0 }-1, u_{0 },\nabla\Phi_0\in H^{N}$ for an integer $N\ge 3$ and
\begin{equation}\label{neutral condition}
\int_{\r3} (\rho_{0 }-1)\,dx=0\ (\text{neutrality}).
\end{equation}
 Then there exists a constant $\delta_0$ such that if
\begin{equation}\label{1Hn/2}
\norm{\rho_{0 }-1}_{H^{3}}+\norm{u_{0 }}_{H^{3}}+\norm{\nabla\Phi_0}_{H^{3}}\le \delta_0,
\end{equation}
then the problem \eqref{1NS} admits a unique global solution $(\rho,u,\nabla\Phi)$ satisfying that for all $t\ge 0$,
\begin{equation}\label{1HNbound}
\begin{split}
&\norm{ (\rho -1)(t)}_{H^{N}}^2+\norm{ u (t)}_{H^{N}}^2+\norm{ \nabla\Phi(t)}_{H^{N}}^2
+\int_0^t \norm{(\rho -1)(\tau)}_{H^{N}}^2+\norm{\nabla
u (\tau)}_{H^{N}}^2+\norm{\nabla \nabla\Phi(\tau)}_{H^{N}}^2d\tau
\\&\quad\le C\left(\norm{ \rho_{0 }-1}_{H^{N}}^2+\norm{ u_{0 }}_{H^{N}}^2+\norm{ \nabla\Phi_0}_{H^{N}}^2\right).
\end{split}
\end{equation}

If further, $\rho_{0 }-1, u_{0 },\nabla\Phi_0\in \dot{H}^{-s}$ for some $s\in [0,3/2)$, then for all $t\ge 0$,
\begin{equation}\label{1H-sbound}
\norm{ (\rho -1)(t)}_{\dot{H}^{-s}}^2+\norm{ u(t)}_{\dot{H}^{-s}}^2+\norm{ \nabla\Phi(t)}_{\dot{H}^{-s}}^2\le C_0,
\end{equation}
and the following decay results hold:
\begin{equation}\label{1decay}
\norm{\nabla^\ell (\rho -1)(t)}_{H^{N-\ell}}+\norm{\nabla^\ell u(t)}_{H^{N-\ell}}+\norm{\nabla^\ell \nabla\Phi(t)}_{H^{N-\ell}} \le
C_0(1+t)^{-\frac{\ell+s}{2}}\ \hbox{ for }\ell=0,\dots, N-1,
\end{equation}
and
\begin{equation}\label{1decay2}
\norm{\nabla^\ell (\rho -1)(t)}_{L^2}\le C_0(1+t)^{-\frac{\ell+s+1}{2}}\ \hbox{ for }\ell=0,\dots, N-2.
\end{equation}
\end{theorem}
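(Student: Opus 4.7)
The plan is to follow the scaled-energy / negative-Sobolev framework of \cite{GW}, adapted to exploit the NSP-specific structural gain in the density dissipation that comes from the Poisson coupling. I would first rewrite the system in perturbation form for $\varrho = \rho - 1$ and derive a family of scaled energy-dissipation inequalities: for each $\ell = 0, \ldots, N-1$,
$$\frac{d}{dt}\mathcal{E}^N_\ell(t) + \mathcal{D}^N_\ell(t) \le 0,$$
where $\mathcal{E}^N_\ell \sim \sum_{k=\ell}^N \|\nabla^k(\varrho, u, \nabla\Phi)\|_{L^2}^2$ (plus small cross-term corrections) and $\mathcal{D}^N_\ell \sim \sum_{k=\ell}^N \|\nabla^k\varrho\|_{L^2}^2 + \sum_{k=\ell+1}^N\|\nabla^k u\|_{L^2}^2$. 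Pairing $\nabla^k$ of the equations against $(\nabla^k\varrho, \nabla^k u, \nabla^k\nabla\Phi)$ yields only $\|\nabla^{k+1}u\|_{L^2}^2$ as dissipation; the density dissipation is produced by adding a small multiple of the interaction functional $\int\nabla^k u\cdot\nabla^{k+1}\varrho$, whose time derivative gives $\|\nabla^{k+1}\varrho\|_{L^2}^2$ from $\nabla\varrho\cdot\nabla\varrho$ and, crucially, an \emph{extra} $\|\nabla^k\varrho\|_{L^2}^2$ from the Poisson term via $-\int\nabla^{k+1}\Phi\cdot\nabla^{k+1}\varrho = \int|\nabla^k\varrho|^2$ using $\Delta\Phi = \varrho$. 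This extra density dissipation at the same order as the test function is the dispersive signature of NSP over NS and is the structural origin of the enhanced decay \eqref{1decay2}. Nonlinearities are controlled by Moser-type product and commutator estimates, absorbed by the smallness of $\|(\varrho, u, \nabla\Phi)\|_{H^3}$; the $\ell = 0$ case yields \eqref{1HNbound} and hence global existence.

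For the negative Sobolev bound \eqref{1H-sbound}, I would apply $\Lambda^{-s}$ to the perturbation system and sum three $L^2$ identities: pair $\Lambda^{-s}$(continuity) with $\Lambda^{-s}\varrho$, $\Lambda^{-s}$(momentum) with $\Lambda^{-s}u$, and the derived $\partial_t\nabla\Phi = -\nabla\Delta^{-1}\mathrm{div}\,u + (\mathrm{nl})$ with $\Lambda^{-s}\nabla\Phi$. The linear flux terms cancel via integration by parts and the identity $\int\nabla\Delta^{-1}\mathrm{div}\,u\cdot\nabla\Phi = \int u\cdot\nabla\Phi$, leaving
$$\tfrac{1}{2}\tfrac{d}{dt}\|\Lambda^{-s}(\varrho, u, \nabla\Phi)\|_{L^2}^2 + \mu\|\Lambda^{-s}\nabla u\|_{L^2}^2 \lesssim (\text{nonlinear}).$$
The nonlinear terms are controlled by the Hardy-Littlewood-Sobolev bound $\|\Lambda^{-s}(fg)\|_{L^2}\lesssim \|f\|_{L^{p_1}}\|g\|_{L^{p_2}}$ composed with Sobolev embedding $\dot{H}^\sigma\hookrightarrow L^p$; the restriction $s<3/2$ is precisely what keeps the relevant HLS exponents admissible. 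Time integration against the $H^N$ bound already established proves \eqref{1H-sbound}.

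For the decay rates I would combine \eqref{1H-sbound} with the Sobolev interpolation
$$\|\nabla^\ell f\|_{L^2}\lesssim \|\nabla^{\ell+1}f\|_{L^2}^{1-\theta}\|\Lambda^{-s}f\|_{L^2}^\theta,\quad \theta = \tfrac{1}{\ell+1+s},$$
applied to the low-order contributions of $\mathcal{E}^N_\ell$ not directly dominated by $\mathcal{D}^N_\ell$ (for the ``missing'' term $\|\nabla^{\ell-1}\varrho\|_{L^2}^2$ arising from $\|\nabla^\ell\nabla\Phi\|_{L^2}^2$ in $\mathcal{E}^N_\ell$, I would use $\|\Lambda^{-s-1}\varrho\|_{L^2} = \|\Lambda^{-s}\nabla\Phi\|_{L^2}$, both bounded by \eqref{1H-sbound}). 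This yields $\mathcal{E}^N_\ell\lesssim C_0^{2/(\ell+1+s)}(\mathcal{D}^N_\ell)^{(\ell+s)/(\ell+1+s)}$, and insertion into the energy-dissipation inequality produces the ODE
$$\frac{d}{dt}\mathcal{E}^N_\ell + C_0^{-2/(\ell+s)}(\mathcal{E}^N_\ell)^{1+1/(\ell+s)} \le 0,$$
which integrates to $\mathcal{E}^N_\ell(t)\lesssim C_0(1+t)^{-(\ell+s)}$, giving \eqref{1decay}. The enhanced decay \eqref{1decay2} is then immediate from the Poisson-equation identity $\|\nabla^\ell\varrho\|_{L^2} = \|\nabla^{\ell+1}\nabla\Phi\|_{L^2}$ (verified in Fourier) applied to \eqref{1decay} with $\ell$ replaced by $\ell+1$ (valid for $\ell\le N-2$). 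The main obstacle I anticipate is closing the negative Sobolev estimate at the critical $H^3$ regularity under the sharp constraint $s<3/2$: the quasilinear nonlinearities such as $h(\varrho)\nabla\varrho$ and $u\cdot\nabla u$ require carefully balanced HLS and Sobolev-embedding exponents that must simultaneously respect HLS admissibility and the available $H^N$ regularity.
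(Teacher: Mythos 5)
Your architecture matches the paper's essentially point for point: the scaled family of energy inequalities with minimum derivative count $\ell$, the cross functional $\int\nabla^k u\cdot\nabla\nabla^k\varrho\,dx$ generating density dissipation, the Poisson identity $-\int\nabla^k\nabla\Phi\cdot\nabla\nabla^k\varrho\,dx=\|\nabla^k\varrho\|_{L^2}^2$ producing the NSP-specific dissipation at order $k$ rather than $k+1$, the $\Lambda^{-s}$ estimate with Hardy--Littlewood--Sobolev, the interpolation inequality closing $\mathcal{E}_\ell^N$ against $\mathcal{D}_\ell^N$ via the uniform $\dot{H}^{-s}$ bound, and reading off the extra half power of decay for $\varrho$ from $\|\nabla^\ell\varrho\|_{L^2}=\|\nabla^{\ell+1}\nabla\Phi\|_{L^2}$. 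That part of the program is correct and is exactly what the paper executes.

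The genuine gap is the sentence ``Time integration against the $H^N$ bound already established proves \eqref{1H-sbound}.'' That only works for $s\in[0,1/2]$. For $s\in(1/2,3/2)$, the HLS exponent forces you into $L^{3/s}$ with $3/s\in(2,6)$, which by Gagliardo--Nirenberg interpolates between $L^2$ and $\dot{H}^1$ rather than staying inside the dissipative range. The resulting nonlinear bound has the shape
\begin{equation*}
\norm{(\varrho,u)}_{L^2}^{\,s-1/2}\bigl(\norm{\varrho}_{H^2}+\norm{\nabla u}_{H^1}\bigr)^{5/2-s}\sqrt{\mathcal{E}_{-s}},
\end{equation*}
where the dissipative factor carries exponent $5/2-s<2$; since \eqref{1HNbound} only gives you $L^2_t$ control of $\norm{\varrho}_{H^N}$ and $\norm{\nabla u}_{H^N}$, you cannot integrate this in time without an additional ingredient. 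The paper instead bootstraps: it first proves the decay hierarchy \eqref{1decay}--\eqref{1decay2} at $s=1/2$ (which already gives the enhanced density decay $\norm{\nabla^\ell\varrho}_{L^2}^2\lesssim(1+t)^{-(\ell+3/2)}$), then feeds those algebraic rates into the nonlinear bound to obtain $\int_0^t(1+\tau)^{-(7/4-s/2)}\,d\tau<\infty$, which converges precisely when $s<3/2$. Only then does $\mathcal{E}_{-s}$ close for $s\in(1/2,3/2)$. You flag that closing the negative Sobolev estimate under $s<3/2$ is the delicate point, which is the right instinct, but your actual plan misses that a two-stage argument (decay at $s=1/2$ first, then $\dot{H}^{-s}$ bound for larger $s$) is needed and explains where $s<3/2$ is really used.
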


Note that the Hardy-Littlewood-Sobolev theorem  (cf. Lemma \ref{1Riesz}) implies that for $p\in (1,2]$, $L^p\subset \Dot{H}^{-s}$ with $s=3(\frac{1}{p}-\frac{1}{2})\in[0,3/2)$. Then by Theorem \ref{1mainth}, we have the following corollary of the usual $L^p$--$L^2$ type of the optimal decay results:
\begin{Corollary}\label{2mainth}
Under the assumptions of Theorem \ref{1mainth} except that we replace the $\Dot{H}^{-s}$ assumption  by that $\rho_{0 }-1, u_{0 },\nabla\Phi_0\in L^p$ for some $p\in (1,2]$, then the following decay results hold:
\begin{equation}\label{1decay111}
\norm{\nabla^\ell (\rho -1)(t)}_{H^{N-\ell}}+\norm{\nabla^\ell u(t)}_{H^{N-\ell}}+\norm{\nabla^\ell \nabla\Phi(t)}_{H^{N+1-\ell}} \le
C_0(1+t)^{-\sigma_{p,\ell}}\ \hbox{ for }\ell=0,\dots,N-1,
\end{equation}
and
\begin{equation}\label{1decay2222}
\norm{\nabla^\ell (\rho -1)(t)}_{L^2}\le C_0(1+t)^{-\left(\sigma_{p,\ell}+\frac{1}{2}\right)}\ \hbox{ for }\ell=0,\dots,N-2.
\end{equation}
Here the number $\sigma_{p,\ell}$ is defined by
\begin{equation}
\sigma_{p,\ell}:=\frac{3}{2}\left(\frac{1}{p}-\frac{1}{2}\right)+\frac{\ell}{2}.
\end{equation}

\end{Corollary}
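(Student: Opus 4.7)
My plan is to reduce Corollary \ref{2mainth} directly to Theorem \ref{1mainth} via the Hardy--Littlewood--Sobolev inequality. The key observation is that, setting
\[
s:=3\left(\frac{1}{p}-\frac{1}{2}\right)\in[0,3/2)\quad\text{for }p\in(1,2],
\]
Lemma \ref{1Riesz} furnishes a continuous embedding $L^p(\mathbb{R}^3)\hookrightarrow \dot{H}^{-s}(\mathbb{R}^3)$. Thus the hypothesis $\rho_0-1,u_0,\nabla\Phi_0\in L^p$ automatically yields $\rho_0-1,u_0,\nabla\Phi_0\in \dot{H}^{-s}$, with a quantitative bound on the $\dot{H}^{-s}$ norms in terms of the $L^p$ norms. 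All remaining hypotheses of Theorem \ref{1mainth}---the $H^N$ regularity, the smallness in $H^3$, and the neutrality condition \eqref{neutral condition}---are inherited directly, so I can immediately invoke the conclusions \eqref{1H-sbound}--\eqref{1decay2} with this $s$.

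With this choice of $s$, the decay exponents produced by Theorem \ref{1mainth} become
\[
\frac{\ell+s}{2}=\frac{\ell}{2}+\frac{3}{2}\left(\frac{1}{p}-\frac{1}{2}\right)=\sigma_{p,\ell},\qquad \frac{\ell+s+1}{2}=\sigma_{p,\ell}+\frac{1}{2},
\]
matching exactly the rates required in \eqref{1decay111} and \eqref{1decay2222}. This instantly supplies the bounds on $\norm{\nabla^\ell(\rho-1)(t)}_{H^{N-\ell}}$ and $\norm{\nabla^\ell u(t)}_{H^{N-\ell}}$ for $\ell=0,\dots,N-1$, as well as the improved $L^2$ decay \eqref{1decay2222} of $\nabla^\ell(\rho-1)$ for $\ell=0,\dots,N-2$.

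The main obstacle---modest though it is---is that the corollary asks for $\norm{\nabla^\ell\nabla\Phi(t)}_{H^{N+1-\ell}}$, one derivative more than the $H^{N-\ell}$ bound delivered by Theorem \ref{1mainth}. I will recover this extra derivative directly from the Poisson equation: since $\Delta\Phi=\rho-1$, iterated Riesz transform bounds give $\norm{\nabla^{k+2}\Phi}_{L^2}\lesssim\norm{\nabla^{k}(\rho-1)}_{L^2}$ for all $k\ge 0$. The only norm entering $\norm{\nabla^\ell\nabla\Phi(t)}_{H^{N+1-\ell}}$ not already controlled by \eqref{1decay} is the top-order piece $\norm{\nabla^{N+2}\Phi}_{L^2}\lesssim\norm{\nabla^{N}(\rho-1)}_{L^2}$, and the right-hand side is bounded via \eqref{1decay} at index $N-1$ by $(1+t)^{-(N-1+s)/2}$. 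Since $\ell\le N-1$, this rate is at least as fast as $(1+t)^{-\sigma_{p,\ell}}$, so combining with the $H^{N-\ell}$ bound on $\nabla^\ell\nabla\Phi$ from Theorem \ref{1mainth} closes \eqref{1decay111}. All substantive analysis sits in Theorem \ref{1mainth}; only the (routine) HLS embedding and the (elementary) Riesz bound for $\Phi$ are needed to finish.
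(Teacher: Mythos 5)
Your proposal is correct and follows essentially the same route as the paper: the Hardy--Littlewood--Sobolev embedding $L^p\subset\dot H^{-s}$ with $s=3\left(\tfrac1p-\tfrac12\right)\in[0,3/2)$ (Lemma~\ref{1Riesz}) reduces the corollary to Theorem~\ref{1mainth}, whose exponents $\tfrac{\ell+s}{2}$ and $\tfrac{\ell+s+1}{2}$ become exactly $\sigma_{p,\ell}$ and $\sigma_{p,\ell}+\tfrac12$. Your extra step recovering the top derivative $\norm{\nabla^{N+2}\Phi}_{L^2}\lesssim\norm{\nabla^N(\rho-1)}_{L^2}$ from the Poisson equation is a sound fix for the apparent mismatch between $H^{N-\ell}$ in \eqref{1decay} and $H^{N+1-\ell}$ in \eqref{1decay111}; in fact the paper's own proof already delivers this extra derivative at \eqref{1proof13}, so the discrepancy is only in the statement of \eqref{1decay}.
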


The followings are several remarks for Theorem \ref{1mainth} and Corollary \ref{2mainth}.

\begin{Remark}
We remark that by the Poisson equation,
\begin{equation}
\norm{\rho_{0 }-1}_{H^{N}\cap \dot{H}^{-s}}+\norm{\nabla\Phi_0}_{H^{N}\cap \dot{H}^{-s}}\sim \norm{\rho_{0 }-1}_{H^{N}\cap \dot{H}^{-s} }+\norm{\Lambda^{-1}\rho_0}_{\dot{H}^{-s}},\ s\ge 0,
\end{equation}
and
\begin{equation}
\norm{\rho_{0 }-1}_{H^{N}\cap L^p}+\norm{\nabla\Phi_0}_{H^{N}\cap L^p}\sim \norm{\rho_{0 }-1}_{H^{N}\cap L^p }+\norm{\Lambda^{-1}\rho_0}_{L^p},\ p\in [1,2].
\end{equation}
Compared to the study of the NS system, such norms on $\Lambda^{-1}\varrho_0$ are additionally required. But they can be achieved by the natural neutral condition \eqref{neutral condition}, see the proof in pp. 263--264 of \cite{G}.
\end{Remark}

\begin{Remark}
Notice that for the global existence of the solution we only assume that the $H^3$ norm of initial data is small, while the higher-order Sobolev norms can be arbitrarily large. This is an improvement of \cite{GW} where we required the smallness of the $H^{ [\frac{N}{2}]+2}$ norm of initial data. Also notice that we do not  assume that $\dot{H}^{-s}$ or $L^p$ norm of initial data is small.
\end{Remark}

\begin{Remark}
Notice that both $\dot{H}^{-s}$ and $L^p$ norms enhance the decay rate of the solution. The constraint $s<3/2$ in Theorem \ref{1mainth} comes
from applying Lemma \ref{1Riesz} to estimate the nonlinear terms when doing
the negative Sobolev estimates via $\Lambda ^{-s}$. For $s\geq 3/2$, the
nonlinear estimates would not work. This in turn restricts $p>1$ in Corollary \ref{2mainth} by our method. However, the constraint $p>1$ also seems necessary for the usual $L^p$--$L^2$ approach. In that approach, one should need to estimate the $L^p$ norm of the $\Lambda^{-1}$ acting on the nonlinear terms by using the linear decay rate \eqref{LNSP'}. This requires $p>1$ for applying Lemma \ref{1Riesz}.
\end{Remark}

\begin{Remark}
Note that the $L^2$ optimal decay rate of the higher-order spatial derivatives of the solution are obtained. Then the general optimal $L^q$ decay rates of the solution follow by  the Sobolev interpolation (cf. Lemma \ref{1interpolation}). For instance, it follows from \eqref{1decay111}--\eqref{1decay2222} that
\begin{equation}  \label{Linfty decay}
\norm{\varrho(t)}_{L^\infty}\le C\norm{\varrho(t)}_{L^2}^{\frac{1}{4}}\norm{\nabla^2
\varrho(t)}_{L^2}^{\frac{3}{4}}\le C_0(1+t)^{-\frac{3}{2p}-\frac{1}{2}}\text{ and }
\norm{\left(u,\nabla\Phi\right)(t)}_{L^\infty} \le C_0(1+t)^{-\frac{3}{2p}}.
\end{equation}
 We remark that Corollary \ref{2mainth} not only provides an alternative approach to derive the $L^p$--$L^2$ type of the optimal decay results but also improves the results in the usual $L^p$--$L^2$ approach, where one seems needing to assume that $p$ is near $1$ for the nonlinear system and the optimal decay rate of the higher-order (greater than 3) spatial derivatives of the solution is not clear.
\end{Remark}

We will prove Theorem \ref{1mainth}  by the energy method that we recently developed in \cite{GW}. As there, we may use the linear heat equation to illustrate the main idea of this approach in advance. Let $u(t)$ be the solution to the heat equation
\begin{equation}\label{heat equation}
\left\{\begin{array}{lll}
\partial_tu -\Delta u=0\ \text{ in }\mathbb{R}^3
\\u|_{t=0}=u_0,
\end{array}\right.
\end{equation}
Let $-s\le \ell\le N$. The standard energy identity of \eqref{heat equation} is
\begin{equation}\label{heat 1}
\frac{1}{2}\frac{d}{dt}\norm{\nabla^\ell u}_{L^2}^2+\norm{\nabla^{\ell+1} u}_{L^2}^2=0.
\end{equation}
Integrating the above in time, we obtain
\begin{equation}\label{heat 2}
\norm{\nabla^\ell u(t)}_{L^2}^2\le \norm{\nabla^\ell u_0}_{L^2}^2.
\end{equation}
Note that the energy in \eqref{heat 1} is not bounded by the corresponding dissipation. But the crucial observation is that by the Sobolev interpolation the dissipation still can give some control on the energy: for $-s<\ell\le N$, by Lemma \ref{1-sinte}, we interpolate
to get
\begin{equation}\label{heat 3}
\norm{\nabla^\ell u(t)}_{L^2} \le \norm{\Lambda^{-s} u(t)}_{L^2}^\frac{1}{\ell+1+s}\norm{\nabla^{\ell+1} u(t)}_{L^2}^\frac{\ell+s}{\ell+1+s}.
\end{equation}
Combining \eqref{heat 3} and \eqref{heat 2} (with $\ell=-s$), we obtain
\begin{equation}\label{heat 4}
\norm{\nabla^{\ell+1} u(t)}_{L^2}\ge \norm{\Lambda^{-s} u_0}_{L^2}^{-\frac{1}{\ell+s}}\norm{\nabla^\ell u(t)}_{L^2}^{1+\frac{1}{\ell+s}}.
\end{equation}
Plugging \eqref{heat 4} into \eqref{heat 1}, we deduce that there exists a constant $C_0>0$ such
that
\begin{equation}\label{heat 5}
\frac{d}{dt}\norm{\nabla^\ell u}_{L^2}^2+C_0\left(\norm{\nabla^\ell u}_{L^2}^2\right)^{1+\frac{1}{\ell+s}}\le 0.
\end{equation}
Solving this inequality directly, and by \eqref{heat 2}, we obtain the following decay result:
\begin{equation}\label{heat 555}
\norm{\nabla^\ell u(t)}_{L^{2}}^{2}\leq \left( \norm{\nabla^\ell u_0}%
_{L^{2}}^{-\frac{2}{\ell +s}}+\frac{C_{0}t}{\ell +s}\right) ^{-(\ell
+s)}\leq C_{0}(1+t)^{-(\ell +s)}.
\end{equation}
Hence, we conclude our decay results by the pure energy method. Although \eqref{heat 555} can be proved by the Fourier analysis or
spectral method, the same strategy in our proof can be applied to nonlinear
system with two essential points in the proof: (1) closing the energy
estimates at each $\ell $-th level (referring to the order of the spatial
derivatives of the solution); (2) deriving a novel negative Sobolev
estimates for nonlinear system which requires $s<3/2$ ($n/2$ for dimension $n$).

In the rest of this paper, except that we will collect in Appendix the analytic tools which will be used, we will apply the energy method illustrated above to prove Theorem \ref{1mainth}. However, we will be
not able to close the energy estimates at each $\ell $-th level as the
heat equation. This is caused by the ``degenerate" dissipative structure of the NSP system when using our energy method. More precisely, the linear energy identity of the problem reads as: for $k=0,\dots,N$,
\begin{equation}\label{1energy identity}
\frac{1}{2}\frac{d}{dt}\int_{\mathbb{R}^3}|\nabla^k \varrho|^2+|\nabla^k u|^2+|\nabla^k \nabla\Phi|^2\,dx
+\int_{\mathbb{R}^3} {\mu}|\nabla \nabla^k u|^2+( {\mu}+ {\lambda})|\diverge \nabla^k u|^2\,dx=0.
\end{equation}
The constraint \eqref{1viscosity} implies that there exists a constant $\sigma_0>0$ such that
\begin{equation}\label{1coercive}
\int_{\mathbb{R}^3} {\mu}|\nabla \nabla^k u|^2+( {\mu}+ {\lambda})|\diverge \nabla^k u|^2\,dx
\ge \sigma_0 \norm{\nabla^{k+1} u}_{L^2}^2.
\end{equation}
Note that \eqref{1energy identity} and \eqref{1coercive} only give the dissipative estimate for $u$. To rediscover the dissipative estimate for $\varrho$ and $\nabla\Phi$, we will use the equations  via constructing the interactive energy functional between $u$ and $\nabla\varrho$ to deduce
\begin{equation}\label{1daf}
\begin{split}
&\frac{d}{dt}\int_{\mathbb{R}^3}  \nabla^ku\cdot\nabla\nabla^k\varrho\,dx
+C\left(\norm{\nabla^k\varrho}_{L^2}^2+\norm{\nabla^{k+1}\varrho}_{L^2}^2+\norm{\nabla^{k+1}\nabla\Phi}_{L^2}^2 +\norm{\nabla^{k+2}\nabla\Phi}_{L^2}^2\right)
\\&\quad \lesssim\norm{\nabla^{k+1}u}_{L^2}^2+\norm{\nabla^{k+2}u}_{L^2}^2.
\end{split}
\end{equation}
This implies that to get the dissipative estimate for $\varrho$ and $\nabla\Phi$ it requires us to do the energy estimates \eqref{1energy identity} at both the $k$-th and the ($k+1$)-th levels (referring to the order of the spatial derivatives of the solution). To get around this obstacle, the idea is to construct some energy functional  ${\mathcal{E}}%
_{\ell }^{m}(t),\ 0\le\ell\le m-1$ with $1\le m\le N$ ($\ell$ less than $N-1$ is restricted by \eqref{1daf}),
\begin{equation}
{\mathcal{E}}_{\ell }^{m}(t)\backsim \sum_{\ell \leq k\leq m}%
\norm{\left[\nabla^k \varrho(t),\nabla^k u(t),\nabla^k \nabla\Phi(t)\right]}_{L^2}^{2},
\end{equation}%
which has a \textit{minimum} derivative count $\ell .$ We will then close the energy estimates at each $\ell $-th level in a weak sense by deriving
the Lyapunov-type inequality (cf. \eqref{1proof5}) for these energy
functionals in which the corresponding dissipation (denoted by ${\mathcal{D}}%
_{\ell }^{m}(t)$) can be related to the energy ${\mathcal{E}}_{\ell }^{m}(t)$
similarly as \eqref{heat 4} by the Sobolev interpolation. This can be easily
established for the linear homogeneous problem along our analysis, however,
for the nonlinear problem \eqref{1NS2} it is much more complicated due to
the nonlinear estimates. This is the second point of this paper that we will
extensively and carefully use the Sobolev interpolation of the
Gagliardo-Nirenberg inequality between high-order and low-order spatial
derivatives to bound the nonlinear terms by $\sqrt{\mathcal{E}_{0}^{3}(t)}{\mathcal{D}}_{\ell }^{m}(t)$ that can be absorbed. When deriving the negative Sobolev
estimates, we need to restrict that $s<3/2$ in order to estimate $\Lambda ^{-s}$ acting on the
nonlinear terms by using the Hardy-Littlewood-Sobolev inequality, and also
we need to separate the cases that $s\in (0,1/2]$ and $s\in (1/2,3/2)$. Once
these estimates are obtained, Theorem \ref{1mainth} follows by the
interpolation between negative and positive Sobolev norms similarly as the heat equation case.

To end this introduction, we will compare the NSP system \eqref{1NS} with some related models with the electric force. The mostly related model is that the compressible bipolar Navier-Stokes-Poisson system (BNSP) of describing the dynamic of charged particles of two carrier type (e.g., ions and electrons). It is observed in \cite{LYZ,HL} that the BNSP system can be reformulated to an equivalent system consisting of the NS system for the sum of densities and velocities, $(\rho, u)$, and the NSP system for the difference, $(d, w,\nabla\Phi)$, which are coupled with each other through the nonlinear terms. Then for the linearized BNSP system, $(\rho, u)$ decays as the NS system and $(d, w,\nabla\Phi)$ decays as the NSP system. However, for the nonlinear BNSP system there is a new difficulty arising when estimating the nonlinear interactive terms. Precisely, there is one term $\varrho  w \nabla\Phi$ that we can not bound it through our energy method since $L^2$ norm of these three functions are all not included in the dissipation rate. Hence, it is interesting to apply our energy method to the BNSP system. But in the study of the kinetic models of the Vlasov-Poisson-Boltzmann system, the situation is contrary. The  one-species Vlasov-Poisson-Boltzmann system \cite{DS1} will encounter the similar difficulty as the BNSP system when applying our energy method. But due to the special cancelation property between two species which gives the dissipative estimates of the $L^2$ norm of the electric field, in \cite{W} we have successfully applied our energy method to the two-species Vlasov-Poisson-Boltzmann system to show the decay rates of the solution. The decay result itself is very attractive: the total density of
two species of particles decays at the optimal algebraic rate as the Boltzmann equation, but the disparity between two species and the electric field decay at an exponential rate. Finally, it is also interesting to  compare with the compressible Euler-Poisson system (EP) without the viscosity. It was shown in \cite{G} that for the irrotational EP system both density and velocity decay in $L^\infty$ norm at the rate $(1+t)^{-\beta}$ for any $\beta\in (1,3/2)$. Compared to our result (cf. \eqref{Linfty decay}) it can be comprehended that the dissipative effect of the  viscosity enhances the decay rate of the density.

\section{Energy estimates}\label{energy estimate}

Denoting $\varrho=\rho-1$, we rewrite \eqref{1NS} in the perturbation form as
\begin{equation}\label{1NS2}
\left\{\begin{array}{lll} \displaystyle\partial_t\varrho +{\rm div}u=-{\rm div}(\varrho u)
\\\displaystyle\partial_tu - \mu \Delta u -( \mu + \lambda )\nabla{\rm div}u+ \nabla\varrho-\nabla\Phi
=-u\cdot\nabla u -h(\varrho)\left( \mu \Delta u+( \mu + \lambda )\nabla{\rm div} u\right)-f(\varrho)\nabla\varrho
\\\Delta\Phi=\varrho
\\ (\varrho,u)|_{t=0}=(\varrho_0,  u _0),
\end{array}\right.
\end{equation}
where the two nonlinear functions of $\varrho$ are defined by
\begin{equation}\label{1 h and f}
h(\varrho):=\frac{\varrho}{\varrho+1} \hbox{ and }
f(\varrho):=\frac{p'(\varrho+1)}{{\varrho+1}}-1.
\end{equation}
In this section, we will derive the a priori energy estimates for the equivalent system \eqref{1NS2}. Hence we assume a priori that for
sufficiently small $\delta>0$,
\begin{equation}\label{1a priori}
\sqrt{\mathcal{E}_0^3(t)}=\norm{\varrho(t)}_{H^{3}}+\norm{u(t)}_{H^{3}}+\norm{\nabla\Phi(t)}_{H^{3}}\le \delta.
\end{equation}

First of all, by \eqref{1a priori} and Sobolev's inequality, we obtain
\begin{equation}
{1}/{2}\le \varrho+1\le  2.
\end{equation}
Hence, we immediately have
\begin{equation} \label{1hf}
|h(\varrho)|,|f(\varrho)|\le  C|\varrho|\hbox{ and } |h^{(k)}(\varrho)|,|f^{(k)}(\varrho)| \le C\hbox{ for any }k\ge 1.
\end{equation}

We first derive the following energy estimates which contains the dissipation estimate for $u$.
\begin{lemma}\label{1Ekle}
If $\sqrt{\mathcal{E}_0^3(t)}\le \delta$, then for $k=0,\dots,N$, we have
\begin{equation}\label{1E_k}
\begin{split}
&\frac{d}{dt}\int_{\mathbb{R}^3}  |\nabla^{k} \varrho|^2+|\nabla^{k} u|^2+ |\nabla^k\nabla\Phi|^2\,dx+C\norm{\nabla^{k+1}
u}_{L^2}^2
\\&\quad\lesssim  \sqrt{\mathcal{E}_0^3}\left(\norm{\nabla^{k}\varrho}_{L^2}^2+\norm{\nabla^{k+1}u}_{L^2}^2+\norm{\nabla^{k+1}\nabla\Phi}_{L^2}^2\right).
\end{split}
\end{equation}
\end{lemma}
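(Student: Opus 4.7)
My plan is to prove \eqref{1E_k} by applying $\nabla^k$ to the three equations of \eqref{1NS2}, pairing each with its natural $L^2$ test function, and combining the three identities so that all linear cross-terms collapse into the claimed LHS, while the nonlinear remainders are controlled by $\sqrt{\mathcal{E}_0^3}$ times the claimed RHS. Concretely, testing $\nabla^k$ of the continuity equation against $\nabla^k\varrho$ and $\nabla^k$ of the momentum equation against $\nabla^k u$ produces two copies of $\int_{\r3}\nabla^k\varrho\,\nabla^k{\rm div}\,u\,dx$ with opposite signs, which cancel, and extracts the viscous dissipation $\mu\norm{\nabla^{k+1}u}_{L^2}^2+(\mu+\lambda)\norm{\nabla^k{\rm div}\,u}_{L^2}^2\ge\sigma_0\norm{\nabla^{k+1}u}_{L^2}^2$ via \eqref{1coercive}.

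To bring the Poisson term into the energy, I rewrite the surviving $-\int\nabla^k\nabla\Phi\cdot\nabla^k u\,dx$ by integrating by parts in $x$, then substitute $\nabla^k{\rm div}\,u=-\partial_t\nabla^k\varrho-\nabla^k{\rm div}(\varrho u)$ from the continuity equation and use $\varrho=\Delta\Phi$, which produces
\[
-\int_{\r3}\nabla^k\nabla\Phi\cdot\nabla^k u\,dx=\tfrac{1}{2}\tfrac{d}{dt}\norm{\nabla^k\nabla\Phi}_{L^2}^2+\int_{\r3}\nabla^k\nabla\Phi\cdot\nabla^k(\varrho u)\,dx.
\]
Summing the three identities then yields exactly the left-hand side of \eqref{1E_k}, leaving the five nonlinear residues $I_1=-\int\nabla^k{\rm div}(\varrho u)\,\nabla^k\varrho$, $I_2=-\int\nabla^k(u\cdot\nabla u)\cdot\nabla^k u$, $I_3=-\int\nabla^k[h(\varrho)(\mu\Delta u+(\mu+\lambda)\nabla{\rm div}\,u)]\cdot\nabla^k u$, $I_4=-\int\nabla^k[f(\varrho)\nabla\varrho]\cdot\nabla^k u$, and $I_5=\int\nabla^k\nabla\Phi\cdot\nabla^k(\varrho u)$, all over $\r3$, to be absorbed into the right-hand side.

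Each $I_j$ is expanded via the Leibniz rule and estimated by H\"{o}lder together with the Gagliardo-Nirenberg interpolation of Lemma \ref{1interpolation}. The organizing principle is to distribute derivatives so that exactly one factor sits in an $H^3$ norm, hence is controlled by $\sqrt{\mathcal{E}_0^3}$ through Sobolev embedding, while the remaining factors pair as $\norm{\nabla^k\varrho}_{L^2}\cdot\norm{\nabla^{k+1}u}_{L^2}$, $\norm{\nabla^{k+1}u}_{L^2}^2$, or $\norm{\nabla^{k+1}\nabla\Phi}_{L^2}\cdot\norm{\nabla^k\varrho}_{L^2}$; the composite nonlinearities $h(\varrho)$ and $f(\varrho)$ are handled by the chain rule and the pointwise bounds \eqref{1hf}, exploiting that both vanish at $\varrho=0$ to supply an extra $\varrho$-factor.

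The main obstacle is $I_3$, which nominally carries $k+2$ derivatives of $u$ while the dissipation controls only $\norm{\nabla^{k+1}u}_{L^2}$. I overcome this by rewriting $h(\varrho)\Delta u={\rm div}(h(\varrho)\nabla u)-h'(\varrho)\nabla\varrho\cdot\nabla u$ (and $h(\varrho)\nabla{\rm div}\,u$ analogously), so that after a single integration by parts the top-order viscous factor appears as $\nabla^{k+1}u$ paired against $\nabla^{k+1}u$. The resulting schematic terms $\nabla^j h(\varrho)\,\nabla^{k-j+1}u\cdot\nabla^{k+1}u$ for $0\le j\le k$ are then bounded by a case split (treating $j\le[k/2]$ and $j>[k/2]$ separately, placing the less-differentiated factor in $L^\infty$ through $H^3$), yielding a $\sqrt{\mathcal{E}_0^3}\norm{\nabla^{k+1}u}_{L^2}^2$ bound. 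The analogous and easier treatments of $I_1,I_2,I_4,I_5$ dispose of the remaining nonlinearities and complete the proof.
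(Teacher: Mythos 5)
Your overall architecture agrees with the paper's: apply $\nabla^k$, test against $\nabla^k\varrho$ and $\nabla^k u$, let the linear cross-terms cancel, extract the viscous dissipation via \eqref{1coercive}, pass the Poisson pairing through the continuity and Poisson equations to produce $\tfrac{1}{2}\tfrac{d}{dt}\norm{\nabla^k\nabla\Phi}_{L^2}^2$ plus a remainder, and then bound the nonlinearities by Leibniz, H\"older, and Gagliardo--Nirenberg placing one factor in an $H^3$-controlled norm. Your treatment of the viscous composite term ($I_3$) is also fine: whether one rewrites $h(\varrho)\Delta u={\rm div}(h(\varrho)\nabla u)-h'(\varrho)\nabla\varrho\cdot\nabla u$ or simply integrates by parts once as the paper does, the top-order factor becomes $\nabla^{k+1}u$.

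The gap is in your $I_5=\int_{\r3}\nabla^k\nabla\Phi\cdot\nabla^k(\varrho u)\,dx$. You use $\varrho=\Delta\Phi$ only once, to convert $\partial_t\nabla^k\varrho$ into the time derivative of $\norm{\nabla^k\nabla\Phi}_{L^2}^2$; you then list $I_5$ among the nonlinearities to be dispatched ``analogously'' by Leibniz and interpolation, with pairings such as $\norm{\nabla^{k+1}\nabla\Phi}_{L^2}\cdot\norm{\nabla^k\varrho}_{L^2}$. This cannot succeed for the extreme Leibniz term $\ell=0$, namely $\int_{\r3}\varrho\,\nabla^k u\cdot\nabla^k\nabla\Phi\,dx$. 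Placing $\varrho$ in any Lebesgue space and applying H\"older gives a product of $\norm{\nabla^k u}_{L^2}$ (or $L^6$) and $\norm{\nabla^k\nabla\Phi}_{L^2}$ (or $L^6$), but the right-hand side of \eqref{1E_k} controls only $\norm{\nabla^{k+1}u}_{L^2}^2$ and $\norm{\nabla^{k+1}\nabla\Phi}_{L^2}^2$ (and $\norm{\nabla^k\varrho}_{L^2}^2$); at best one of the two factors can be raised by one derivative via $L^6\hookrightarrow\dot H^1$, never both, so this term falls one full derivative short. The paper flags precisely this obstruction and resolves it by substituting $\varrho=\Delta\Phi$ \emph{a second time} inside $\nabla^k(\varrho u)$ and then integrating by parts, turning $I_5$ into terms built from $\nabla\Phi\cdot\nabla u$ and $\nabla\Phi\cdot u$ paired against $\nabla^k\nabla\Phi$ and $\nabla^{k+1}\nabla\Phi$ respectively (equation \eqref{1E_k_poi_1}); after this rewriting every Leibniz extreme carries the right derivative count and the interpolation scheme you describe closes. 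Without this second use of the Poisson equation, your argument for $I_5$ breaks down.
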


\begin{proof}
Applying $\nabla^{k}$ to $\eqref{1NS2}_1, \eqref{1NS2}_2$ and multiplying the resulting identities by $ \nabla^{k}\varrho$, $\nabla^{k} u$ respectively, summing up them and
then integrating over $\mathbb{R}^3$ by parts, we obtain
\begin{equation}\label{1E_k_0}
\begin{split}
&\frac{1}{2}\frac{d}{dt}\int_{\mathbb{R}^3}  |\nabla^{k} \varrho|^2+|\nabla^{k} u|^2\,dx +\int_{\mathbb{R}^3} \mu |\nabla^{k+1}
u|^2+( \mu + \lambda )|\nabla^{k}{\rm div}u|^2\,dx -\int_{\mathbb{R}^3}\nabla\nabla^k\Phi\cdot \nabla^k u\,dx
\\&\quad=\int_{\mathbb{R}^3} \nabla^{k} (-\varrho{\rm div}u-u\cdot\nabla\varrho)\nabla^{k}\varrho
-\nabla^{k}\left(u\cdot\nabla u+h(\varrho)( \mu \Delta u+( \mu + \lambda )\nabla{\rm div} u)+f(\varrho)\nabla\varrho\right)\cdot\nabla^{k} u\,dx
\\&\quad:=J_1+J_2+J_3+J_4+J_5.
\end{split}
\end{equation}

We shall first estimate each term in the right hand side of \eqref{1E_k_0}. The main idea is that we will carefully interpolate the spatial derivatives between the higher-order derivatives and the lower-order derivatives to bound these nonlinear terms by the right hand side of \eqref{1E_k}. First, for the term $J_1$, employing the Leibniz formula and by H\"older's
inequality, we obtain
\begin{equation}\label{1E_k_1_0}
\begin{split}
J_1&:=- \int_{\mathbb{R}^3}\nabla^{k} (\varrho{\rm div}u)\nabla^{k}\varrho\,dx
=-  \sum_{0\le\ell\le k}C_{k}^\ell\int_{\mathbb{R}^3}  \nabla^\ell\varrho \nabla^{k-\ell}{\rm div}u \nabla^{k}\varrho\,dx
\\&\lesssim  \sum_{0\le\ell\le k} \norm{\nabla^\ell\varrho \nabla^{k-\ell+1}u}_{L^2}\norm{\nabla^{k}\varrho}_{L^2}.
\end{split}
\end{equation}
To estimate the first factor in the above, we take the $L^\infty$-norm on the term with less number of derivatives. Hence, if $\ell\le \left[\frac{k}{2}\right]$, together with the Sobolev interpolation of Lemma \ref{1interpolation}, we have
\begin{equation}\label{1E_k_1_1}
\begin{split}
\norm{\nabla^\ell\varrho \nabla^{k-\ell+1}u}_{L^2}&\lesssim \norm{\nabla^\ell\varrho}_{L^\infty} \norm{\nabla^{k-\ell+1}u}_{L^2}
\\&\lesssim \norm{\nabla^\alpha\varrho}_{L^2} ^{1-\frac{\ell}{k}}\norm{\nabla^k\varrho}_{L^2}^\frac{\ell}{k} \norm{\nabla u}_{L^2}^\frac{\ell}{k}\norm{\nabla^{k+1}u}_{L^2}^{1-\frac{\ell}{k}}.
\end{split}
\end{equation}
Here $\alpha$ comes from the adjustment of the index between the energy and the dissipation, which is defined by
\begin{equation}\label{alpha}
\begin{split}
&\frac{\ell}{3}=\left(\frac{\alpha}{3}-\frac{1}{2}\right)\times\left(1-\frac{\ell}{k}\right)
+\left(\frac{k}{3}-\frac{1}{2}\right)\times \frac{\ell}{k}
\\ &\quad \Longrightarrow \alpha=\frac{ 3k}{2(k-\ell)}\in \left[\frac{3}{2},3\right] \, \text{ since }\ell\le \frac{k}{2}.
\end{split}
\end{equation}
Hence, by the definition of the energy $\mathcal{E}_0^3$ and Young's inequality, we obtain that for $\ell\le \left[\frac{k}{2}\right]$,
\begin{equation}\label{1E_k_1_1`}
\norm{\nabla^\ell\varrho \nabla^{k-\ell+1}u}_{L^2}\lesssim\sqrt{\mathcal{E}_0^3}\left(\norm{\nabla^{k}\varrho}_{L^2} +\norm{\nabla^{k+1}u}_{L^2} \right).
\end{equation}
If $\left[\frac{k}{2}\right]+1\le \ell\le k $ (if $k<\left[\frac{k}{2}\right]+1$, then it's nothing in this case, and hereafter, etc.), we have
\begin{equation}\label{1E_k_1_2}
\begin{split}
\norm{\nabla^\ell\varrho \nabla^{k-\ell+1}u}_{L^2}&\lesssim \norm{\nabla^\ell\varrho}_{L^2} \norm{\nabla^{k-\ell+1}u}_{L^\infty}
\\&\lesssim \norm{\nabla \varrho}_{L^2}^{1-\frac{\ell-1}{k-1}}  \norm{\nabla^k \varrho}_{L^2}^{\frac{\ell-1}{k-1}} \norm{\nabla^\alpha u}_{L^2}^{\frac{\ell-1}{k-1}}\norm{\nabla^{k+1}u}_{L^2}^{1-\frac{\ell-1}{k-1}}
\\&\lesssim\sqrt{\mathcal{E}_0^3}\left(\norm{\nabla^{k}\varrho}_{L^2} +\norm{\nabla^{k+1}u}_{L^2} \right),
\end{split}
\end{equation}
where $\alpha$ is defined by
\begin{equation}
\begin{split}
&\frac{k+1-\ell}{3}=\left(\frac{\alpha}{3}-\frac{1}{2}\right)\times\frac{\ell-1}{k-1}
+\left(\frac{k+1}{3}-\frac{1}{2}\right)\times\left(1-\frac{\ell-1}{k-1}\right)
\\ &\quad \Longrightarrow \alpha=\frac{ k-1}{ 2(\ell-1) }+2\in \left[\frac{5}{2},3\right]\, \text{ since }\ell\ge \frac{k+1}{2}.
\end{split}
\end{equation}
In light of \eqref{1E_k_1_1`} and \eqref{1E_k_1_2},  by Cauchy's inequality, we deduce from \eqref{1E_k_1_0} that
\begin{equation}\label{1E_k_1}
J_{1}\lesssim  \sqrt{\mathcal{E}_0^3}\left(\norm{\nabla^{k}\varrho}_{L^2}^2+\norm{\nabla^{k+1}u}_{L^2}^2\right).
\end{equation}

Next, for the term $J_2$, we utilize the commutator notation \eqref{1commuta} to rewrite it as
\begin{equation}\label{1E_k_2_0}
\begin{split}
J_2&:=- \int_{\mathbb{R}^3}\nabla^{k} (u\cdot\nabla\varrho)\nabla^{k}\varrho\,dx
=- \int_{\mathbb{R}^3} \left(u\cdot\nabla \nabla^{k}\varrho+[\nabla^{k},u]\cdot\nabla\varrho\right)\nabla^{k}\varrho\,dx
\\&:=J_{21}+J_{22}.
\end{split}
\end{equation}
By integrating by part, by Sobolev's inequality, we have
\begin{equation}\label{1E_k_2_1}
\begin{split}
J_{21}&=- \int_{\mathbb{R}^3}  u\cdot\nabla \frac{|\nabla^{k}\varrho|^2 }{2}\,dx =\frac{1 }{2}\int_{\mathbb{R}^3}  {\rm div}u\,
{|\nabla^{k}\varrho|^2 }\,dx
\\&\lesssim\norm{\nabla u}_{L^\infty}\norm{\nabla^{k}\varrho}_{L^2}^2\lesssim \sqrt{\mathcal{E}_0^3} \norm{\nabla^{k}\varrho}_{L^2}^2.
\end{split}
\end{equation}
We use the commutator estimate of Lemma \ref{1commutator} and Sobolev's inequality to bound
\begin{equation}\label{1E_k_2_2}
\begin{split}
J_{22} &\lesssim \left(\norm{\nabla u}_{L^\infty}\norm{\nabla^{k-1}\nabla\varrho}_{L^2}+\norm{\nabla^{k}u }_{L^6}\norm{\nabla
\varrho}_{L^3}\right)\norm{\nabla^{k}\varrho}_{L^2}
\\&\lesssim \sqrt{\mathcal{E}_0^3} \left(\norm{\nabla^{k}\varrho}_{L^2}^2+\norm{\nabla^{k+1}u}_{L^2}^2\right).
\end{split}
\end{equation}
In light of \eqref{1E_k_2_1}--\eqref{1E_k_2_2}, we find
\begin{equation}\label{1E_k_2}
J_{2}\lesssim \sqrt{\mathcal{E}_0^3}\left(\norm{\nabla^{k}\varrho}_{L^2}^2+\norm{\nabla^{k+1}u}_{L^2}^2\right).
\end{equation}

We now estimate the term $J_3$. By H\"older's  and Sobolev's inequalities, we obtain
\begin{equation}\label{E_k_3_0}
\begin{split}
J_3&=-\int_{\mathbb{R}^3} \nabla^{k}\left(u\cdot\nabla u \right)\cdot\nabla^{k} u\,dx
=-\sum_{0\le \ell\le k}C_k^\ell\int_{\mathbb{R}^3}  \left(\nabla^{\ell}u\cdot\nabla\nabla^{k-\ell} u \right) \cdot\nabla^{k} u\,dx
\\&\lesssim \sum_{0\le \ell\le k}\norm{\nabla^{\ell} u\cdot \nabla^{k-\ell+1} u }_{L^{\frac{6}{5}}}\norm{\nabla^{k} u}_{L^6}
\\&\lesssim \sum_{0\le \ell\le k}\norm{\nabla^{\ell} u\cdot \nabla^{k-\ell+1} u }_{L^{\frac{6}{5}}}\norm{\nabla^{k+1} u}_{L^2}.
\end{split}
\end{equation}
If $\ell\le \left[\frac{k}{2}\right]$, by H\"older's inequality and Lemma \ref{1interpolation}, we have
\begin{equation}\label{1E_k_1_9}
\begin{split}
\norm{\nabla^{\ell} u\cdot \nabla^{k-\ell+1} u }_{L^{\frac{6}{5}}}&\lesssim\norm{\nabla^{\ell} u}_{L^3}\norm{ \nabla^{k-\ell+1} u }_{L^2}
\\&\lesssim \norm{\nabla^\alpha u}_{L^2} ^{1-\frac{\ell}{k+1}}\norm{\nabla^{k+1}u}_{L^2}^\frac{\ell}{k+1} \norm{ u}_{L^2}^\frac{\ell}{k+1}\norm{\nabla^{k+1}u}_{L^2}^{1-\frac{\ell}{k+1}}
\\&\lesssim  \sqrt{\mathcal{E}_0^3}\norm{\nabla^{k+1} u}_{L^2},
\end{split}
\end{equation}
where $\alpha$ is defined by
\begin{equation}\label{aa2}
\begin{split}
&\frac{\ell}{3}-\frac{1}{3}=\left(\frac{\alpha}{3}-\frac{1}{2}\right)\times\left(1-\frac{\ell}{k+1}\right)
+\left(\frac{k+1}{3}-\frac{1}{2}\right)\times \frac{\ell}{k+1}
\\ &\quad \Longrightarrow \alpha=\frac{ k+1 }{2(k+1-\ell) }\in \left[\frac{1}{2},1\right) \, \text{ since }\ell\le \frac{k}{2}.
\end{split}
\end{equation}
If $\ell\ge \left[\frac{k}{2}\right]+1$, by H\"older's inequality and Lemma \ref{1interpolation} again, we have
\begin{equation}\label{1E_k_1_000}
\begin{split}
\norm{\nabla^{\ell} u\cdot \nabla^{k-\ell+1} u }_{L^{\frac{6}{5}}}&\lesssim\norm{\nabla^{\ell} u}_{L^2}\norm{ \nabla^{k-\ell+1} u }_{L^3}
\\&\lesssim \norm{  u}_{L^2} ^{1-\frac{\ell}{k+1}}\norm{\nabla^{k+1}u}_{L^2}^\frac{\ell}{k+1} \norm{ \nabla^\alpha u}_{L^2}^\frac{\ell}{k+1}\norm{\nabla^{k+1}u}_{L^2}^{1-\frac{\ell}{k+1}}
\\&\lesssim  \sqrt{\mathcal{E}_0^3}\norm{\nabla^{k+1} u}_{L^2},
\end{split}
\end{equation}
where $\alpha$ is defined by
\begin{equation}
\begin{split}
 &\frac{k-\ell+1}{3}-\frac{1}{3}=\left(\frac{\alpha}{3}-\frac{1}{2}\right)\times\frac{\ell}{k+1}
+\left(\frac{k+1}{3}-\frac{1}{2}\right)\times\left(1-\frac{\ell}{k+1}\right)
\\&\quad\Longrightarrow \alpha=\frac{ k+1 }{2\ell } \in \left(\frac{1}{2},1\right]\, \text{ since }\ell\ge \frac{k+1}{2}.
\end{split}
\end{equation}
In light of \eqref{1E_k_1_9} and \eqref{1E_k_1_000},  we deduce from \eqref{E_k_3_0} that
\begin{equation}\label{1E_k_3}
J_{3} \lesssim  \sqrt{\mathcal{E}_0^3}\norm{\nabla^{k+1} u}_{L^2}^2.
\end{equation}

Next, we estimate the term $J_4$. We do the approximation to simplify the presentations by
\begin{equation}\label{1E_k_4_0}
\begin{split}
J_4&:=-\int_{\mathbb{R}^3} \nabla^{k}\left( h(\varrho)( \mu \Delta u+( \mu + \lambda )\nabla{\rm div} u)\right)\cdot\nabla^{k} u\,dx
\approx -\int_{\mathbb{R}^3} \nabla^{k} \left(h(\varrho)\nabla^2u\right)\cdot\nabla^{k} u\,dx.
\end{split}
\end{equation}
If $k=0$, by the fact \eqref{1hf} and H\"older's and Cauchy's inequalities, we obtain
\begin{equation}\label{1E_k_4_11}
\begin{split}
J_4\approx-\int_{\mathbb{R}^3} h(\varrho)\nabla^2u\cdot u\,dx\lesssim \norm{\varrho}_{L^2}\norm{\nabla^2u}_{L^3}\norm{u}_{L^6}\lesssim \sqrt{\mathcal{E}_0^3}\left(\norm{\varrho}_{L^2}^2+\norm{\nabla u}_{L^2}^2\right).
\end{split}
\end{equation}
For $k\ge 1$, we integrate by parts to have
\begin{equation}
\begin{split}
J_4& \approx \int_{\mathbb{R}^3} \nabla^{k-1} \left(h(\varrho)\nabla^2u\right)\cdot\nabla^{k+1} u\,dx
=\sum_{0\le \ell\le k-1}C_{k-1}^\ell\int_{\mathbb{R}^3}\nabla^\ell h(\varrho)\nabla^{k-\ell+1}u  \cdot\nabla^{k+1} u \,dx
\\&\lesssim \sum_{0\le \ell\le k-1}\norm{\nabla^\ell h(\varrho)\nabla^{k-\ell+1}u}_{L^2}\norm{\nabla^{k+1} u}_{L^2}.
\end{split}
\end{equation}
If $0\le \ell\le \left[\frac{k}{2}\right]$, by \eqref{1hf}, Lemma \ref{infty} and the estimates in \eqref{1E_k_1_1}, we obtain
\begin{equation}\label{1E_k_4_13}
\begin{split}
 \norm{\nabla^\ell h(\varrho)\nabla^{k-\ell+1}u}_{L^2}&\lesssim \norm{\nabla^\ell h(\varrho)}_{L^\infty}\norm{\nabla^{k-\ell+1}u}_{L^2}
 \lesssim \norm{\nabla^\ell \varrho}_{L^\infty}\norm{\nabla^{k-\ell+1}u}_{L^2}
\\&
 \lesssim  \sqrt{\mathcal{E}_0^3}\left(\norm{\nabla^{k}\varrho}_{L^2}+\norm{\nabla^{k+1}u}_{L^2}\right).
\end{split}
\end{equation}
If $\left[\frac{k}{2}\right]+1\le \ell\le k-1$, we rewrite this factor to have
\begin{equation}\label{1E_k_4_14}
\begin{split}
 \norm{\nabla^\ell h(\varrho)\nabla^{k-\ell+1}u}_{L^2}&=\norm{\nabla^{\ell-1}\left( h'(\varrho)\nabla \varrho\right)\nabla^{k-\ell+1}u}_{L^2}
 =\norm{\sum_{0\le m\le \ell-1} C_{\ell-1}^m \nabla^m h'(\varrho)\nabla^{\ell-m} \varrho\nabla^{k-\ell+1}u}_{L^2}
\\&\lesssim\sum_{0\le m\le \ell-1}\norm{\nabla^m h'(\varrho)\nabla^{\ell-m}\varrho\nabla^{k-\ell+1}u}_{L^2}.
\end{split}
\end{equation}
For $m=0$, by the fact \eqref{1hf} and the estimates in \eqref{1E_k_1_2}, we have
\begin{equation}\label{1E_k_4_15}
  \norm{ h'(\varrho)\nabla^{\ell}\varrho\nabla^{k-\ell+1}u}_{L^2}\lesssim \norm{\nabla^{\ell}\varrho}_{L^2}\norm{\nabla^{k-\ell+1}u}_{L^\infty} \lesssim  \sqrt{\mathcal{E}_0^3}\left(\norm{\nabla^{k}\varrho}_{L^2}+\norm{\nabla^{k+1}u}_{L^2}\right).
\end{equation}
For $1\le m\le \ell-1$, by Lemma \ref{infty} and Lemma \ref{1interpolation}, we have
\begin{equation}\label{1E_k_4_16}
\begin{split}
  &\norm{\nabla^m h'(\varrho)\nabla^{\ell-m}\varrho\nabla^{k-\ell+1}u}_{L^2}
\\&\quad \lesssim  \norm{\nabla^m h'(\varrho)}_{L^\infty}\norm{\nabla^{\ell-m}\varrho}_{L^\infty}\norm{\nabla^{k-\ell+1}u}_{L^2}
  \lesssim  \norm{\nabla^m \varrho}_{L^\infty}\norm{\nabla^{\ell-m}\varrho}_{L^\infty}\norm{\nabla^{k-\ell+1}u}_{L^2}
\\&\quad \lesssim  \norm{\nabla^2 \varrho}_{L^2}^{1-\frac{m-\frac{1}{2}}{k-2}}\norm{\nabla^k \varrho}_{L^2}^{\frac{m-\frac{1}{2}}{k-2}}
\norm{\nabla^2 \varrho}_{L^2}^{1-\frac{\ell-m-\frac{1}{2}}{k-2}}\norm{\nabla^k \varrho}_{L^2}^{\frac{\ell-m-\frac{1}{2}}{k-2}} \norm{\nabla^{k-\ell+1}u}_{L^2}
\\&\quad \lesssim  \norm{\nabla^2 \varrho}_{L^2}^{2-\frac{\ell-1}{k-2}}\norm{\nabla^k \varrho}_{L^2}^{\frac{\ell-1}{k-2}}
\norm{\nabla^\alpha u}_{L^2}^{\frac{\ell-1}{k-2}}\norm{\nabla^{k+1}u}_{L^2}^{1-\frac{\ell-1}{k-2}}
\\&\quad\lesssim   {\mathcal{E}_0^3}\left(\norm{\nabla^{k}\varrho}_{L^2} +\norm{\nabla^{k+1}u}_{L^2} \right),
\end{split}
\end{equation}
where $\alpha$ is defined by
\begin{equation}
\begin{split}
 &k-\ell+1=\alpha\times \frac{\ell-1}{k-2}
+ (k+1)\times\left(1-\frac{\ell-1}{k-2}\right)
\\&\quad\Longrightarrow \alpha=3-\frac{k-2}{\ell-1}\in (1,2]\, \text{ since } \frac{k+1}{2}\le \ell\le k-1.
\end{split}
\end{equation}
In light of \eqref{1E_k_4_15} and \eqref{1E_k_4_16}, we deduce from \eqref{1E_k_4_14} that for $\left[\frac{k}{2}\right]+1\le \ell\le k-1$,
\begin{equation} \label{1E_k_4_17}
\norm{\nabla^\ell h(\varrho)\nabla^{k-\ell+1}u}_{L^2}\lesssim  \sqrt{\mathcal{E}_0^3}\left(\norm{\nabla^{k}\varrho}_{L^2}+\norm{\nabla^{k+1}u}_{L^2}\right).
\end{equation}
This together with \eqref{1E_k_4_13} and \eqref{1E_k_4_11} implies that
\begin{equation} \label{1E_k_4}
J_4\lesssim  \sqrt{\mathcal{E}_0^3}\left(\norm{\nabla^{k}\varrho}_{L^2}^2+\norm{\nabla^{k+1}u}_{L^2}^2\right).
\end{equation}

Finally, it remains to estimate the last term $J_5$.  If $k=0$, by the fact \eqref{1hf} and H\"older's and Cauchy's inequalities, we obtain
\begin{equation}\label{1E_k_5_11}
J_5=-\int_{\mathbb{R}^3} f(\varrho)\nabla\varrho\cdot u\,dx\lesssim \norm{\varrho}_{L^2}\norm{\nabla\varrho}_{L^3}\norm{u}_{L^6}\lesssim \sqrt{\mathcal{E}_0^3}\left(\norm{\varrho}_{L^2}^2+\norm{\nabla u}_{L^2}^2\right).
\end{equation}
For $k\ge 1$, we integrate by parts to have
\begin{equation}\label{1E_k_5_0}
\begin{split}
J_5&=\int_{\mathbb{R}^3} \nabla^{k-1}\left( f(\varrho)\nabla\varrho\right)\cdot\nabla^{k+1} u\,dx
=\sum_{0\le \ell\le k-1}C_{k-1}^\ell\int_{\mathbb{R}^3}\nabla^\ell f(\varrho)\nabla^{k-\ell}\varrho  \cdot\nabla^{k+1} u \,dx
\\&\lesssim \sum_{0\le \ell\le k-1}\norm{\nabla^\ell f(\varrho)\nabla^{k-\ell}\varrho}_{L^2}\norm{\nabla^{k+1} u}_{L^2}.
\end{split}
\end{equation}
If $0\le \ell\le \left[\frac{k}{2}\right]$, by Lemma \ref{infty} and Lemma \ref{1interpolation}, we have
\begin{equation}\label{1E_k_5_13}
\begin{split}
 \norm{\nabla^\ell f(\varrho)\nabla^{k-\ell}\varrho}_{L^2}&\lesssim \norm{\nabla^\ell f(\varrho)}_{L^\infty}\norm{\nabla^{k-\ell}\varrho}_{L^2}
 \lesssim \norm{\nabla^\ell \varrho}_{L^\infty}\norm{\nabla^{k-\ell}\varrho}_{L^2}
\\& \lesssim \norm{\nabla^\alpha\varrho}_{L^2} ^{1-\frac{\ell}{k}}\norm{\nabla^k\varrho}_{L^2}^\frac{\ell}{k} \norm{\varrho}_{L^2}^\frac{\ell}{k}\norm{\nabla^{k}\varrho}_{L^2}^{1-\frac{\ell}{k}}\lesssim \sqrt{\mathcal{E}_0^3}\norm{\nabla^k\varrho}_{L^2},
\end{split}
\end{equation}
where $\alpha$ is the same one defined by \eqref{alpha}. If $\left[\frac{k}{2}\right]+1\le \ell\le k-1$, we rewrite this factor to have
\begin{equation}\label{1E_k_5_14}
\begin{split}
 \norm{\nabla^\ell f(\varrho)\nabla^{k-\ell}\varrho}_{L^2}&=\norm{\nabla^{\ell-1}\left( f'(\varrho)\nabla \varrho\right)\nabla^{k-\ell}\varrho}_{L^2}
 =\norm{\sum_{0\le m\le \ell-1} C_{\ell-1}^m \nabla^m f'(\varrho)\nabla^{\ell-m} \varrho\nabla^{k-\ell }\varrho}_{L^2}
\\&\lesssim\sum_{0\le m\le \ell-1}\norm{\nabla^m f'(\varrho)\nabla^{\ell-m}\varrho\nabla^{k-\ell}\varrho}_{L^2}.
\end{split}
\end{equation}
For $m=0$, by the fact  \eqref{1hf} and Lemma \ref{1interpolation}, we have
\begin{equation}\label{1E_k_5_15}
\begin{split}
  \norm{ f'(\varrho)\nabla^{\ell}\varrho\nabla^{k-\ell}\varrho}_{L^2}&\lesssim \norm{\nabla^{\ell}\varrho}_{L^2}\norm{\nabla^{k-\ell}\varrho}_{L^\infty}
  \\&\lesssim \norm{\varrho}_{L^2}^{1-\frac{\ell}{k}}  \norm{\nabla^k \varrho}_{L^2}^{\frac{\ell}{k}} \norm{\nabla^\alpha \varrho}_{L^2}^{\frac{\ell}{k}}\norm{\nabla^{k}\varrho}_{L^2}^{1-\frac{\ell}{k}}\lesssim \sqrt{\mathcal{E}_0^3}\norm{\nabla^k\varrho}_{L^2},
\end{split}
\end{equation}
where $\alpha$ is defined by
\begin{equation}
\begin{split}
&\frac{k-\ell}{3}=\left(\frac{\alpha}{3}-\frac{1}{2}\right)\times\frac{\ell}{k}
+\left(\frac{k}{3}-\frac{1}{2}\right)\times\left(1-\frac{\ell}{k}\right)
\\ &\quad \Longrightarrow \alpha=\frac{ 3k}{ 2\ell }\le 3\, \text{ since }  \ell\ge\frac{k+1}{2}.
\end{split}
\end{equation}
For $1\le m\le \ell-1$, by Lemma \ref{infty} and Lemma \ref{1interpolation}, we have
\begin{equation}\label{1E_k_5_16}
\begin{split}
&  \norm{\nabla^m f'(\varrho)\nabla^{\ell-m}\varrho\nabla^{k-\ell}\varrho}_{L^2}
\\&\quad \lesssim  \norm{\nabla^m f'(\varrho)}_{L^\infty}\norm{\nabla^{\ell-m}\varrho}_{L^\infty}\norm{\nabla^{k-\ell}\varrho}_{L^2}
 \lesssim  \norm{\nabla^m \varrho}_{L^\infty}\norm{\nabla^{\ell-m}\varrho}_{L^\infty}\norm{\nabla^{k-\ell}\varrho}_{L^2}
\\&\quad \lesssim  \norm{\nabla^2 \varrho}_{L^2}^{1-\frac{m-\frac{1}{2}}{k-2}}\norm{\nabla^k \varrho}_{L^2}^{\frac{m-\frac{1}{2}}{k-2}}
\norm{\nabla^2 \varrho}_{L^2}^{1-\frac{\ell-m-\frac{1}{2}}{k-2}}\norm{\nabla^k \varrho}_{L^2}^{\frac{\ell-m-\frac{1}{2}}{k-2}} \norm{\nabla^{k-\ell}\varrho}_{L^2}
\\&\quad \lesssim  \norm{\nabla^2 \varrho}_{L^2}^{2-\frac{\ell-1}{k-2}}\norm{\nabla^k \varrho}_{L^2}^{\frac{\ell-1}{k-2}}
\norm{\nabla^\alpha \varrho}_{L^2}^{\frac{\ell-1}{k-2}}\norm{\nabla^{k}\varrho}_{L^2}^{1-\frac{\ell-1}{k-2}}\\&\quad \lesssim  {\mathcal{E}_0^3} \norm{\nabla^{k}\varrho}_{L^2},
\end{split}
\end{equation}
where $\alpha$ is defined by
\begin{equation}
\begin{split}
 &k-\ell=\alpha\times \frac{\ell-1}{k-2}
+ k\times\left(1-\frac{\ell-1}{k-2}\right)
\\&\quad\Longrightarrow \alpha=2-\frac{k-2}{\ell-1}\in (0,1]\, \text{ since } \frac{k+1}{2}\le \ell\le k-1.
\end{split}
\end{equation}
In light of \eqref{1E_k_5_15} and \eqref{1E_k_5_16}, we deduce from \eqref{1E_k_5_14} that for $\left[\frac{k}{2}\right]+1\le \ell\le k-1$,
\begin{equation} \label{1E_k_5_17}
\norm{\nabla^\ell f(\varrho)\nabla^{k-\ell}\varrho}_{L^2}\lesssim  \sqrt{\mathcal{E}_0^3} \norm{\nabla^{k}\varrho}_{L^2}.
\end{equation}
This together with \eqref{1E_k_5_13} and \eqref{1E_k_5_11} implies that
\begin{equation}\label{1E_k_5}
J_{5}\lesssim  \sqrt{\mathcal{E}_0^3}\left(\norm{\nabla^{k}\varrho}_{L^2}^2+\norm{\nabla^{k+1}u}_{L^2}^2\right).
\end{equation}

Now we turn to estimate the left hand side of \eqref{1E_k_0}. For the second term, we have
\begin{equation}\label{1E_k_66}
\int_{\mathbb{R}^3} \mu |\nabla^{k+1}
u|^2+( \mu + \lambda )|\nabla^{k}{\rm div}u|^2\,dx\ge \sigma_0\norm{\nabla^{k+1}u}_{L^2}^2.
\end{equation}
While for the last term, by the continuity equation $\eqref{1NS2}_1$ and the Poisson equation
$\eqref{1NS2}_3$ and the integration by parts, we get
\begin{equation}\label{1E_k_poi_0}
\begin{split}
&-\int_{\mathbb{R}^3}\nabla\nabla^k\Phi\cdot \nabla^k u\,dx=\int_{\mathbb{R}^3}\nabla^k\Phi\,\nabla^k{\rm div }u\,dx
\\&\quad=- \int_{\mathbb{R}^3}\nabla^k\Phi\nabla^k\partial_t\varrho+ \nabla^k\Phi\nabla^k{\rm div }(\varrho u)\,dx
\\&\quad=- \int_{\mathbb{R}^3}\nabla^k\Phi\nabla^k\partial_t\Delta\Phi-\nabla^k(\varrho u)\cdot\nabla^k\nabla\Phi\,dx
\\&\quad= \frac{1}{2}\frac{d}{dt}\int_{\mathbb{R}^3}|\nabla^k\nabla\Phi|^2\,dx+ \int_{\mathbb{R}^3}\nabla^k(\varrho
u)\cdot\nabla^k\nabla\Phi\,dx.
\end{split}
\end{equation}
Notice carefully that we can not estimate the last term in \eqref{1E_k_poi_0} directly. For instance, we may fail to
bound $\int_{\mathbb{R}^3}\varrho \nabla^k u \cdot\nabla^k\nabla\Phi\,dx$ by the right hand side of \eqref{1E_k}. To overcome this obstacle, the key
point is to make full use of again the Poisson equation $\eqref{1NS2}_3$ to rewrite $\varrho=\Delta\Phi$. This idea was also used in \cite{TW}. Indeed, using $\eqref{1NS2}_3$ and the integration by parts, by H\"older's inequality and Lemma
\ref{1interpolation}, we obtain
\begin{equation}\label{1E_k_poi_1}
\begin{split}
& \int_{\mathbb{R}^3}\nabla^k(\varrho u)\cdot\nabla^k\nabla\Phi\,dx
= \int_{\mathbb{R}^3}\nabla^k(\Delta\Phi
u)\cdot\nabla^k\nabla\Phi\,dx
\\& \quad= \int_{\mathbb{R}^3}\nabla^k(\nabla\Phi\cdot\nabla
u)\cdot\nabla^k\nabla\Phi+\nabla^k(\nabla\Phi\cdot u)\cdot\nabla\nabla^k\nabla\Phi\,dx
\\&\quad= \int_{\mathbb{R}^3}  \sum_{0\le \ell\le k}C_k^\ell \nabla^\ell\nabla\Phi \cdot(\nabla^{k-\ell+1}u\cdot\nabla^k\nabla\Phi+\nabla^{k-\ell}u\cdot\nabla\nabla^k\nabla\Phi)\,dx
\\&\quad\lesssim \sum_{0\le \ell\le k}\norm{\nabla^ \ell \nabla\Phi}_{L^3}\left(\norm{ \nabla^{k-\ell+1} u }_{L^2}\norm{\nabla^{k} \nabla\Phi}_{L^6}+\norm{ \nabla^{k-\ell} u }_{L^6}\norm{\nabla^{k+1}
\nabla\Phi}_{L^2}\right)
\\&\quad\lesssim \sum_{0\le \ell\le k}\norm{\nabla^ \ell \nabla\Phi}_{L^3}\norm{ \nabla^{k-\ell+1} u }_{L^2}\norm{\nabla^{k+1} \nabla\Phi}_{L^2}.
\end{split}
\end{equation}
If $\ell\le \left[\frac{k}{2}\right]$, by  Lemma \ref{1interpolation}, we have
\begin{equation}\label{1E_k_1_9991}
\begin{split}
\norm{\nabla^ \ell \nabla\Phi}_{L^3}\norm{ \nabla^{k-\ell+1} u }_{L^2} &\lesssim \norm{\nabla^\alpha \nabla\Phi}_{L^2} ^{1-\frac{\ell}{k+1}}\norm{\nabla^{k+1}\nabla\Phi}_{L^2}^\frac{\ell}{k+1} \norm{ u}_{L^2}^\frac{\ell}{k+1}\norm{\nabla^{k+1}u}_{L^2}^{1-\frac{\ell}{k+1}}
\\&\lesssim  \sqrt{\mathcal{E}_0^3}\left(\norm{\nabla^{k+1}\nabla\Phi}_{L^2}+\norm{\nabla^{k+1}u}_{L^2}\right),
\end{split}
\end{equation}
where $\alpha$ is the same one defined by \eqref{aa2}. Now if $\ell\ge \left[\frac{k}{2}\right]+1$, by Lemma \ref{1interpolation} again, we have
\begin{equation}\label{1E_k_1_9992}
\begin{split}
\norm{\nabla^ \ell \nabla\Phi}_{L^3}\norm{ \nabla^{k-\ell+1} u }_{L^2} &\lesssim \norm{ \nabla\Phi}_{L^2} ^{1-\frac{\ell+\frac{1}{2}}{k+1}}\norm{\nabla^{k+1}\nabla\Phi}_{L^2}^\frac{\ell+\frac{1}{2}}{k+1} \norm{ \nabla^\alpha u}_{L^2}^\frac{\ell+\frac{1}{2}}{k+1}\norm{\nabla^{k+1}u}_{L^2}^{1-\frac{\ell+\frac{1}{2}}{k+1}}
\\&\lesssim  \sqrt{\mathcal{E}_0^3}\left(\norm{\nabla^{k+1}\nabla\Phi}_{L^2}+\norm{\nabla^{k+1}u}_{L^2}\right),
\end{split}
\end{equation}
where $\alpha$ is defined by
\begin{equation}
\begin{split}
 & k-\ell +1= \alpha\times \frac{\ell+\frac{1}{2}}{k+1}+(k+1)\times \frac{\ell+\frac{1}{2}}{k+1}
\\&\quad\Longrightarrow \alpha=\frac{ k+1 }{2 \ell+1  } \in \left(\frac{1}{2},1\right)\hbox{ since }\ell\ge \frac{k+1}{2}.
\end{split}
\end{equation}
In light of the estimates \eqref{1E_k_1_9991}--\eqref{1E_k_1_9992}, we deduce from \eqref{1E_k_poi_0} that
\begin{equation}\label{1E_k_poi}
-\int_{\mathbb{R}^3}\nabla\nabla^k\Phi\cdot \nabla^k u\,dx\ge \frac{1}{2}\frac{d}{dt}\int_{\mathbb{R}^3}|\nabla^k\nabla\Phi|^2\,dx- C\sqrt{\mathcal{E}_0^3}
\left(\norm{\nabla^{k+1}\nabla\Phi}_{L^2}^2+\norm{\nabla^{k+1}u}_{L^2}^2\right).
\end{equation}

Plugging the estimates for $J_1\sim J_5$, $i.e.$, \eqref{1E_k_1}, \eqref{1E_k_2}, \eqref{1E_k_3}, \eqref{1E_k_4} and \eqref{1E_k_5}, and the estimates
\eqref{1E_k_poi} and \eqref{1E_k_66} into \eqref{1E_k_0},  we get \eqref{1E_k}.
\end{proof}

The following lemma provides the dissipation estimate for $\varrho$ and $\nabla\Phi$.
\begin{lemma}\label{1EkEk+1le}
If $\sqrt{\mathcal{E}_0^3(t)}\le \delta$, then for $k=0,\dots,N-1$, we have
\begin{equation}\label{1E_kE_k+1}
\begin{split}
&\frac{d}{dt}\int_{\mathbb{R}^3}  \nabla^ku\cdot\nabla\nabla^k\varrho\,dx
+C\left(\norm{\nabla^k\varrho}_{L^2}^2+\norm{\nabla^{k+1}\varrho}_{L^2}^2+\norm{\nabla^{k+1}\nabla\Phi}_{L^2}^2 +\norm{\nabla^{k+2}\nabla\Phi}_{L^2}^2\right)
\\&\quad \lesssim\norm{\nabla^{k+1}u}_{L^2}^2+\norm{\nabla^{k+2}u}_{L^2}^2.
\end{split}
\end{equation}
\end{lemma}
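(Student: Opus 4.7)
The plan is to form the interactive energy $\int \nabla^k u\cdot\nabla\nabla^k\varrho\,dx$ by applying $\nabla^k$ to the momentum equation $\eqref{1NS2}_2$, taking the $L^2$ inner product with $\nabla\nabla^k\varrho$, and integrating over $\mathbb{R}^3$. The time derivative term is handled by the identity
\begin{equation*}
\int_{\mathbb{R}^3}\nabla^k\partial_tu\cdot\nabla\nabla^k\varrho\,dx=\frac{d}{dt}\int_{\mathbb{R}^3}\nabla^ku\cdot\nabla\nabla^k\varrho\,dx-\int_{\mathbb{R}^3}\nabla^ku\cdot\nabla\nabla^k\partial_t\varrho\,dx,
\end{equation*}
and the last integral is rewritten using the continuity equation $\partial_t\varrho=-\diverge u-\diverge(\varrho u)$. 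After one integration by parts the linear part of the shift becomes $\norm{\nabla^k\diverge u}_{L^2}^2\le \norm{\nabla^{k+1}u}_{L^2}^2$, which is part of the RHS.

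The dissipation on $\varrho$ and $\nabla\Phi$ is produced by the pressure and electric force terms. Pairing $\nabla\nabla^k\varrho$ with $\nabla\nabla^k\varrho$ directly yields $\norm{\nabla^{k+1}\varrho}_{L^2}^2$. For the Poisson contribution, using $\Delta\Phi=\varrho$ and integrating by parts,
\begin{equation*}
-\int_{\mathbb{R}^3}\nabla\nabla^k\Phi\cdot\nabla\nabla^k\varrho\,dx=\int_{\mathbb{R}^3}\Delta\nabla^k\Phi\cdot\nabla^k\varrho\,dx=\norm{\nabla^k\varrho}_{L^2}^2.
\end{equation*}
Once $\norm{\nabla^k\varrho}_{L^2}^2+\norm{\nabla^{k+1}\varrho}_{L^2}^2$ is on the LHS, the $\nabla\Phi$ pieces $\norm{\nabla^{k+1}\nabla\Phi}_{L^2}^2+\norm{\nabla^{k+2}\nabla\Phi}_{L^2}^2$ follow for free by the Poisson equation and $L^2$-boundedness of the Riesz transform on $\mathbb{R}^3$, since $\norm{\nabla^{k+j}\nabla\Phi}_{L^2}\lesssim \norm{\nabla^{k+j-1}\varrho}_{L^2}$ for $j=1,2$. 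The viscous terms contribute
\begin{equation*}
\int_{\mathbb{R}^3}\bigl(\mu\nabla^k\Delta u+(\mu+\lambda)\nabla^k\nabla\diverge u\bigr)\cdot\nabla\nabla^k\varrho\,dx,
\end{equation*}
which by Cauchy's inequality with a small parameter is bounded by $\varepsilon\norm{\nabla^{k+1}\varrho}_{L^2}^2+C_\varepsilon\norm{\nabla^{k+2}u}_{L^2}^2$; taking $\varepsilon$ small lets the first term be absorbed into the LHS, leaving the $\norm{\nabla^{k+2}u}_{L^2}^2$ on the RHS as stated.

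The nonlinear contributions come from $u\cdot\nabla u$, $h(\varrho)(\mu\Delta u+(\mu+\lambda)\nabla\diverge u)$ and $f(\varrho)\nabla\varrho$ in the momentum equation, and from $\diverge(\varrho u)$ picked up from the continuity equation. Each is expanded by the Leibniz formula, one derivative is integrated by parts to the factor $\nabla^k u$ whenever needed to avoid requiring $k+2$ derivatives on $(\varrho,u)$, and then split into a low-derivative factor placed in $L^\infty$ (or $L^3$) and a high-derivative factor placed in $L^2$ (or $L^6$). The $L^\infty$ and $L^3$ norms are then interpolated between $\nabla^2$-$L^2$ and the top-order derivative via Lemma \ref{1interpolation} and Lemma \ref{infty}, exactly in the style of the estimates of $J_1$--$J_5$ in Lemma \ref{1Ekle}, so as to bound each term by $\sqrt{\mathcal{E}_0^3}\bigl(\norm{\nabla^k\varrho}_{L^2}^2+\norm{\nabla^{k+1}\varrho}_{L^2}^2+\norm{\nabla^{k+1}u}_{L^2}^2+\norm{\nabla^{k+2}u}_{L^2}^2\bigr)$; under $\sqrt{\mathcal{E}_0^3}\le \delta\ll 1$ the $\varrho$ contributions are absorbed into the LHS.

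I expect the main obstacle to be the nonlinear piece arising from $-\int \nabla^k u\cdot\nabla\nabla^k\diverge(\varrho u)\,dx$, because naively this asks for $k+2$ derivatives on the product $\varrho u$ paired against $\nabla^k u$, and there is no $\norm{\nabla^{k+2}\varrho}_{L^2}$ available in the dissipation. The remedy is to integrate by parts to move one derivative onto $\nabla^k u$, producing $\int \nabla^{k+1}u\cdot\nabla\nabla^k(\varrho u)\,dx$, and then distribute the remaining $k+1$ derivatives via Leibniz. A secondary delicacy is the coupling of $\nabla^{k+1}\varrho$ with $\nabla^{k+2}u$ coming from the viscous terms: we must use Cauchy with small parameter so that the $\nabla^{k+1}\varrho$ piece does not swallow the dissipation we are trying to create. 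Both are handled by the same Sobolev-interpolation technique already developed for Lemma \ref{1Ekle}, which is why no new analytic input beyond Lemmas \ref{1interpolation} and \ref{infty} and the Poisson-Riesz identification should be required.
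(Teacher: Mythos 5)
Your proposal is correct and follows essentially the same route as the paper: form the cross term by pairing $\nabla^k$ of the momentum equation with $\nabla\nabla^k\varrho$, move the time derivative onto $\nabla^k u$ and use the continuity equation to turn the shift into $\|\nabla^k\operatorname{div}u\|_{L^2}^2$ plus a quadratic term in $\varrho u$, get $\|\nabla^k\varrho\|_{L^2}^2$ from the Poisson pairing via $\Delta\Phi=\varrho$, absorb the viscous cross term with Cauchy's inequality, recover the $\nabla\Phi$ dissipation from the Poisson identity $\|\nabla^{k+j}\nabla\Phi\|_{L^2}=\|\nabla^{k+j-1}\varrho\|_{L^2}$, and control the momentum-equation nonlinearities by the same Leibniz/interpolation estimates established in Lemma \ref{1Ekle}. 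The only cosmetic difference is the order of operations for the time-derivative term (you substitute the continuity equation before the spatial integration by parts, the paper does it after), and you invoke Riesz-transform boundedness where the paper uses the exact Plancherel identity; both give the same conclusion.
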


\begin{proof}
Applying $\nabla^k$ to $\eqref{1NS2}_2$ and then taking the $L^2$ inner product with $\nabla\nabla^k\varrho$, we obtain
\begin{equation}\label{1E_kE_k+1_0}
\begin{split}
&  \int_{\mathbb{R}^3}  |\nabla\nabla^{k}\varrho|^2\,dx-\int_{\mathbb{R}^3}  \nabla^k\nabla\Phi\cdot \nabla\nabla^k\varrho\,dx
\\&\le
-\int_{\mathbb{R}^3} \nabla^k
\partial_tu \cdot\nabla\nabla^k\varrho\,dx+C\norm{\nabla^{k+2} u}_{L^2} \norm{\nabla^{k+1}
\varrho}_{L^2}
\\&\quad+\norm{\nabla^{k}\left(u\cdot\nabla
u+h(\varrho)( \mu \Delta u+( \mu + \lambda )\nabla{\rm div}
u)+f(\varrho)\nabla\varrho\right)}_{L^2}\norm{\nabla^{k+1} \varrho}_{L^2}.
\end{split}
\end{equation}

The delicate first term in the right hand side of \eqref{1E_kE_k+1_0} involves the time derivative, and the key idea is to integrate by parts in the
$t$-variable and use the continuity equation. Thus by $\eqref{1NS2}_1$ and integrating by parts  for both the $t$- and $x$-variables, we may estimate
\begin{equation}\label{1E_kE_k+1_1}
\begin{split}
&-\int_{\mathbb{R}^3} \nabla^{k} u_t\cdot\nabla\nabla^k\varrho\,dx
\\&\quad=-\frac{d}{dt}\int_{\mathbb{R}^3}   \nabla^{k}u\cdot\nabla\nabla^k\varrho\,dx-\int_{\mathbb{R}^3} \nabla^{k} {\rm div}u\cdot\nabla^{k}\varrho_t\,dx
\\&\quad=-\frac{d}{dt}\int_{\mathbb{R}^3} \nabla^{k}u\cdot\nabla\nabla^k\varrho\,dx+\norm{\nabla^{k} {\rm div}u}_{L^2}^2+\int_{\mathbb{R}^3}\nabla^{k} {\rm div}u\cdot\nabla^{k}{\rm div}(\varrho u)\,dx.
\end{split}
\end{equation}
 By H\"older's inequality, we have
\begin{equation}\label{1E_kE_k+1_2}
\int_{\mathbb{R}^3}\nabla^{k} {\rm div}u\cdot\nabla^{k}{\rm div}(\varrho u)\,dx
\lesssim \sum_{0\le \ell\le k+1}\norm{ \nabla^\ell \varrho \nabla^{k+1-\ell} u}_{L^2}\norm{\nabla^{k+1}u}_{L^2}.
\end{equation}
If $0\le\ell\le \left[\frac{k+1}{2}\right]$, by Lemma \ref{1interpolation}, we have
\begin{equation}\label{bbb}
\begin{split}
\norm{ \nabla^\ell \varrho \nabla^{k+1-\ell} u}_{L^2}&\lesssim \norm{ \nabla^\ell \varrho }_{L^\infty} \norm{  \nabla^{k+1-\ell} u}_{L^2}
\\&\lesssim \norm{ \nabla^\alpha \varrho }_{L^2}^{1-\frac{\ell}{k+1}}\norm{ \nabla^{k+1} \varrho }_{L^2}^\frac{\ell}{k+1} \norm{   u}_{L^2}^\frac{\ell}{k+1} \norm{  \nabla^{k+1} u}_{L^2} ^{1-\frac{\ell}{k+1}} \\&\lesssim \sqrt{\mathcal{E}_0^3}\left(\norm{\nabla^{k+1}\varrho}_{L^2}+\norm{\nabla^{k+1}u}_{L^2}\right),
\end{split}
\end{equation}
where $\alpha$ is defined by
\begin{equation}
\begin{split}
&\frac{\ell}{3}=\left(\frac{\alpha}{3}-\frac{1}{2}\right)\times\left(1-\frac{\ell}{k+1}\right)
+\left(\frac{k+1}{3}-\frac{1}{2}\right)\times \frac{\ell}{k+1}
\\ &\quad \Longrightarrow \alpha=\frac{ 3(k+1)}{2(k+1-\ell)}\le 3 \, \text{ since }\ell\le \frac{k+2}{2}.
\end{split}
\end{equation}
While for $ \ell> \left[\frac{k+1}{2}\right]+1$ (then $k+1-\ell\le \left[\frac{k+1}{2}\right]$), we can then interchange the roles of $\varrho$ and $u$ to deduce that \eqref{bbb} holds also for this case. Thus, in view of \eqref{1E_kE_k+1_1}--\eqref{bbb}, we obtain
\begin{equation}\label{1E_kE_k+1_3}
\begin{split}
-\int_{\mathbb{R}^3} \nabla^{k} u_t\cdot\nabla\nabla^k\varrho\,dx \le-\frac{d}{dt}\int_{\mathbb{R}^3} \nabla^{k}
u\cdot\nabla\nabla^k\varrho\,dx+C\norm{\nabla^{k+1} u}_{L^2}^2+ C\sqrt{\mathcal{E}_0^3} \norm{\nabla^{k+1}\varrho}_{L^2}^2.
\end{split}
\end{equation}

Next, note that it has been already proved along the proof of Lemma \ref{1Ekle} that
\begin{equation}\label{1E_kE_k+1_4}
\begin{split}
&\norm{\nabla^{k}\left(u\cdot\nabla u+h(\varrho)( \mu \Delta u+( \mu + \lambda )\nabla{\rm div}
u)+f(\varrho)\nabla\varrho\right)}_{L^2}
\\&\quad\lesssim \sqrt{\mathcal{E}_0^3}
\left(\norm{\nabla^{k+1}\varrho}_{L^2}+\norm{\nabla^{k+2}u}_{L^2}\right).
\end{split}
\end{equation}

We now use the integration by parts and the Poisson equation $\eqref{1NS2}_3$ to have
\begin{equation}\label{1E_kE_k+1_0poi}
-\int_{\mathbb{R}^3}  \nabla^k\nabla\Phi\cdot \nabla\nabla^k\varrho\,dx =\int_{\mathbb{R}^3}  \nabla^k\Delta\Phi\nabla^k\varrho\,dx
=\int_{\mathbb{R}^3}  |\nabla^k\varrho|^2\,dx.
\end{equation}
On the other hand, it follows from the Poisson equation that
\begin{equation}\label{p es2}
\norm{\nabla^{k+1}\nabla\Phi}_{L^2}^2=\norm{\nabla^{k}\Delta\Phi}_{L^2}^2= \norm{\nabla^{k}\varrho}_{L^2}^2\text{ and }\norm{\nabla^{k+2}\nabla\Phi}_{L^2}^2= \norm{\nabla^{k+1}\varrho}_{L^2}^2.
\end{equation}

Consequently, by \eqref{1E_kE_k+1_3}--\eqref{p es2}, together with Cauchy's inequality, since $\sqrt{\mathcal{E}_0^3}\le \delta$ is small, we then deduce
\eqref{1E_kE_k+1} from \eqref{1E_kE_k+1_0}.
\end{proof}

\section{Negative Sobolev estimates}
In this section, we will derive the evolution of the negative Sobolev norms of the solution to \eqref{1NS2}. In order to estimate the nonlinear terms, we need to restrict ourselves to that $s\in (0,3/2)$. We will establish the following lemma.
\begin{lemma}\label{1Esle}
If $\sqrt{\mathcal{E}_0^3(t)}\le \delta$, then for $s\in(0,1/2]$, we have
\begin{equation}\label{1E_s}
\begin{split}&
\frac{d}{dt}\int_{\mathbb{R}^3}  |\Lambda^{-s} \varrho|^2+|\Lambda^{-s} u|^2+ |\Lambda^{-s}\nabla\Phi|^2\,dx
+C\norm{\nabla\Lambda^{-s} u}_{L^2}^2
\\&\quad\lesssim \left(\norm{\varrho}_{H^2}^2 +\norm{ \nabla u}_{H^1}^2 \right) \left(\norm{\Lambda^{-s}\varrho}_{L^2}+\norm{\Lambda^{-s}u}_{L^2}+\norm{\Lambda^{-s}\nabla\Phi}_{L^2}\right);
\end{split}
\end{equation}
and for $s\in(1/2,3/2)$, we have
\begin{equation}\label{1E_s2}
\begin{split}
&\frac{d}{dt}\int_{\mathbb{R}^3}  |\Lambda^{-s}\varrho|^2+|\Lambda^{-s} u|^2+ |\Lambda^{-s}\nabla\Phi|^2\,dx
+C\norm{\nabla\Lambda^{-s} u}_{L^2}^2
\\&\quad\lesssim \norm{ (\varrho, u)}_{L^2}^{s-1/2}\left(\norm{\varrho}_{H^2}+\norm{\nabla u}_{H^1}\right)^{5/2-s}\left(\norm{\Lambda^{-s}\varrho}_{L^2}+\norm{\Lambda^{-s}u}_{L^2}+\norm{\Lambda^{-s}\nabla\Phi}_{L^2}\right).
\end{split}
\end{equation}
\end{lemma}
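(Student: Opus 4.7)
The plan is to mimic the proof of Lemma \ref{1Ekle} but with the fractional operator $\Lambda^{-s}$ replacing $\nabla^k$. I would apply $\Lambda^{-s}$ to $\eqref{1NS2}_1$ and $\eqref{1NS2}_2$, take the $L^2$ inner products with $\Lambda^{-s}\varrho$ and $\Lambda^{-s}u$ respectively, and add. The linear terms produce $\tfrac12\tfrac{d}{dt}\bigl(\norm{\Lambda^{-s}\varrho}_{L^2}^2+\norm{\Lambda^{-s}u}_{L^2}^2\bigr)$ together with the dissipation $\mu\norm{\nabla\Lambda^{-s}u}_{L^2}^2+(\mu+\lambda)\norm{\Lambda^{-s}\diverge u}_{L^2}^2$ and the Poisson coupling $-\int_{\r3}\Lambda^{-s}\nabla\Phi\cdot\Lambda^{-s}u\,dx$. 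I handle the coupling term exactly as in \eqref{1E_k_poi_0}: integrate by parts to obtain $\int_{\r3}\Lambda^{-s}\Phi\cdot\Lambda^{-s}\diverge u\,dx$, substitute $\diverge u=-\partial_t\varrho-\diverge(\varrho u)$ from $\eqref{1NS2}_1$, and use the Poisson equation $\Delta\Phi=\varrho$; this produces $\tfrac12\tfrac{d}{dt}\norm{\Lambda^{-s}\nabla\Phi}_{L^2}^2$ together with a single nonlinear remainder $\int_{\r3}\Lambda^{-s}(\varrho u)\cdot\Lambda^{-s}\nabla\Phi\,dx$.

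The proof then reduces to $L^2$-bounds on $\Lambda^{-s}$ applied to each nonlinear term $\diverge(\varrho u)$, $u\cdot\nabla u$, $h(\varrho)\nabla^2u$, $f(\varrho)\nabla\varrho$, and $\varrho u$. My first move is to invoke the Hardy--Littlewood--Sobolev inequality of Lemma \ref{1Riesz}, which for $0<s<3/2$ gives $\norm{\Lambda^{-s}F}_{L^2}\lesssim\norm{F}_{L^p}$ with $1/p=1/2+s/3$; this is precisely where the upper restriction $s<3/2$ enters. When $s\in(0,1/2]$ the exponent $p\in[3/2,2)$ is mild, so each quadratic nonlinearity can be split by H\"older with one factor in $L^2$ and the other in $L^{3/s}$, the latter estimated by the Gagliardo--Nirenberg interpolation $\norm{f}_{L^{3/s}}\lesssim\norm{f}_{L^2}^{2s/3}\norm{f}_{L^\infty}^{1-2s/3}$ together with the embedding $H^2\hookrightarrow L^\infty$. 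Applied in turn to each of the five nonlinearities (after reducing the composite terms $h(\varrho)\nabla^2u,f(\varrho)\nabla\varrho$ to pure product expressions by the pointwise bound $|h(\varrho)|,|f(\varrho)|\lesssim|\varrho|$ from \eqref{1hf}), this yields the quadratic factor $\norm{\varrho}_{H^2}^2+\norm{\nabla u}_{H^1}^2$ multiplying a single negative-norm factor $\norm{\Lambda^{-s}\varrho}_{L^2}+\norm{\Lambda^{-s}u}_{L^2}+\norm{\Lambda^{-s}\nabla\Phi}_{L^2}$, i.e.\ \eqref{1E_s}.

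For $s\in(1/2,3/2)$ the H\"older exponent drops to $p=6/(3+2s)\in(1,3/2)$, which is too small to admit an $L^2$ factor, and the previous scheme breaks down. Instead, I would split each product into two factors in $L^{q_1}\times L^{q_2}$ with $q_1,q_2\in(2,6)$ and $1/q_1+1/q_2=1/p$, then interpolate each factor via Lemma \ref{1interpolation} between $L^2$ and $L^6$, using $H^1\hookrightarrow L^6$ to trade $L^6$ norms for gradient $L^2$ norms. A short computation shows that the balanced choice is $q_2=2$ and $q_1=3/s\in(2,6)$, yielding $\norm{u\cdot\nabla u}_{L^p}\lesssim\norm{u}_{L^2}^{s-1/2}\norm{\nabla u}_{L^2}^{5/2-s}$ and analogous bounds for the other nonlinearities. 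Summing these and handling the composite terms $h(\varrho)\nabla^2u,f(\varrho)\nabla\varrho$ by the same scheme after absorbing $h,f$ pointwise, I obtain exactly the right-hand side of \eqref{1E_s2}, with the characteristic split of $(s-1/2)$ low-regularity powers and $(5/2-s)$ high-regularity powers. The main obstacle throughout is the careful bookkeeping of H\"older/Gagliardo--Nirenberg exponents in this $s>1/2$ regime, since each borderline case (in particular $s\to 3/2^-$) must remain in the admissible range of Lemmas \ref{1Riesz} and \ref{1interpolation}.
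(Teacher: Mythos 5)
Your proposal reproduces the paper's argument in its essentials: apply $\Lambda^{-s}$, take $L^2$ inner products with $\Lambda^{-s}\varrho,\Lambda^{-s}u$, treat the Poisson coupling exactly as in \eqref{1E_k_poi_0} to generate $\tfrac12\tfrac{d}{dt}\norm{\Lambda^{-s}\nabla\Phi}_{L^2}^2$ plus the remainder $\int\Lambda^{-s}\nabla\Phi\cdot\Lambda^{-s}(\varrho u)$, control every nonlinear source $\norm{\Lambda^{-s}(\cdot)}_{L^2}$ via Lemma \ref{1Riesz} with $1/p=1/2+s/3$, split by H\"older with one factor in $L^{3/s}$ and one in $L^2$, then interpolate the $L^{3/s}$ factor by Gagliardo--Nirenberg and absorb $h,f$ via $|h(\varrho)|,|f(\varrho)|\lesssim|\varrho|$. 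The only small deviations are cosmetic: in the regime $s\in(0,1/2]$ you interpolate $\norm{\cdot}_{L^{3/s}}$ between $L^2$ and $L^\infty$ (then $H^2\hookrightarrow L^\infty$), whereas the paper interpolates between $\dot H^1$ and $\dot H^2$; both land inside $\norm{\varrho}_{H^2}^2+\norm{\nabla u}_{H^1}^2$. In the regime $s\in(1/2,3/2)$ you briefly posit $q_1,q_2\in(2,6)$ before settling on $q_2=2$, $q_1=3/s$, which is exactly the paper's split and yields the same $\norm{\cdot}_{L^2}^{s-1/2}\norm{\nabla\cdot}_{L^2}^{3/2-s}$ interpolation and hence the stated $(s-1/2,\,5/2-s)$ exponent pattern.
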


\begin{proof}
Applying $\Lambda^{-s}$  to $\eqref{1NS2}_1, \eqref{1NS2}_2$ and multiplying the resulting identities by $\displaystyle \Lambda^{-s}\varrho, \Lambda^{-s}
u$ respectively, summing up them and then integrating over $\mathbb{R}^3$ by parts, we obtain
\begin{equation}\label{1E_s_0}
\begin{split}
&\frac{1}{2}\frac{d}{dt}\int_{\mathbb{R}^3}  |\Lambda^{-s} \varrho|^2+|  \Lambda^{-s} u|^2\,dx +\int_{\mathbb{R}^3} \mu |\nabla \Lambda^{-s}
u|^2+( \mu + \lambda )| {\rm div} \Lambda^{-s} u|^2\,dx -\int_{\mathbb{R}^3}\Lambda^{-s}\nabla\Phi\cdot \Lambda^{-s}u\,dx
\\&\quad=\int_{\mathbb{R}^3} \Lambda^{-s}\left(-\varrho{\rm div}u-u\cdot\nabla\varrho\right)\Lambda^{-s}\varrho
-\Lambda^{-s}\left(u\cdot\nabla u+h(\varrho)( \mu \Delta u+( \mu + \lambda )\nabla{\rm div}u)+f(\varrho)\nabla\varrho\right)\cdot\Lambda^{-s} u\,dx
\\&\quad:=W_1+W_2+W_3+W_4+W_5.
\end{split}
\end{equation}

In order to estimate the nonlinear terms in the right hand side of \eqref{1E_s_0}, we shall use the estimate \eqref{1Riesz es} of Riesz potential in Lemma \ref{1Riesz}. This forces us to require that $s\in (0,3/2)$. If $ s\in(0,1/2]$, then $1/2+s/3<1$ and $3/s\ge 6$. Then by Lemma
\ref{1Riesz} and Lemma \ref{1interpolation}, together with H\"older's and Young's inequalities, we have
\begin{equation}\label{1E_s_1}
\begin{split}
W_1&=- \int_{\mathbb{R}^3} \Lambda^{-s}(\varrho{\rm div}u )\Lambda^{-s}\varrho\,dx
\lesssim\norm{\Lambda^{-s}(\varrho{\rm div}u )}_{L^2}\norm{\Lambda^{-s}\varrho}_{L^2}
\\&\lesssim \norm{\varrho{\rm div}u }_{L^\frac{1}{1/2+s/3}}\norm{\Lambda^{-s}\varrho}_{L^2}
\lesssim \norm{\varrho}_{L^{3/s}}\norm{\nabla u }_{L^2}\norm{\Lambda^{-s}\varrho}_{L^2}
\\&\lesssim\norm{ \nabla\varrho}_{L^2}^{1/2-s}\norm{ \nabla^2\varrho}_{L^2}^{1/2+s}\norm{\nabla u}_{L^2}\norm{\Lambda^{-s}\varrho}_{L^2}
\\&\lesssim\left(\norm{ \nabla\varrho}_{H^1}^2+\norm{\nabla u}_{L^2}^2\right)\norm{\Lambda^{-s}\varrho}_{L^2}.
\end{split}
\end{equation}
Similarly, we can bound the remaining terms by
\begin{eqnarray}
\label{1E_s_2} &&W_2 =- \int_{\mathbb{R}^3}\Lambda^{-s}(u\cdot\nabla\varrho)\Lambda^{-s}\varrho\,dx \lesssim \left(\norm{ \nabla u}_{H^1}^2+\norm{
\nabla\varrho}_{L^2}^2\right)\norm{\Lambda^{-s}\varrho}_{L^2},
\\\label{1E_s_3}&& W_3=-\int_{\mathbb{R}^3}\Lambda^{-s}\left(u\cdot\nabla
u \right)\cdot\Lambda^{-s} u\,dx \lesssim  \left( \norm{ \nabla u}_{H^1}^2+\norm{\nabla^2 u}_{L^2}^2\right)\norm{\Lambda^{-s}u}_{L^2},
\\&&
W_4=-\int_{\mathbb{R}^3} \Lambda^{-s}\left( h(\varrho)( \mu \Delta u+( \mu + \lambda )\nabla{\rm div} u) \right) \Lambda^{-s} u\,dx
\\&&\quad\ \ \lesssim \left(\norm{ \nabla\varrho}_{H^1}^2+\norm{\nabla^2 u }_{L^2}^2\right)\norm{\Lambda^{-s}u}_{L^2},
\\\label{1E_s_5}&& W_5=-\int_{\mathbb{R}^3} \Lambda^{-s}\left( f(\varrho)\nabla\varrho\right)\cdot\Lambda^{-s} u\,dx \lesssim \left(\norm{
\nabla\varrho}_{H^1}^2+\norm{\nabla^2\varrho}_{L^2}^2\right)\norm{\Lambda^{-s}u}_{L^2}.
\end{eqnarray}

Now if $s\in(1/2,3/2)$, we shall estimate the right hand side of \eqref{1E_s_0} in a different way. Since $ s\in(1/2,3/2)$, we
have that $1/2+s/3<1$ and $2<3/s< 6$. Then by Lemma \ref{1Riesz} and Lemma \ref{1interpolation}, we obtain
\begin{equation}\label{1E_s_12}
\begin{split}
W_1&=- \int_{\mathbb{R}^3} \Lambda^{-s}(\varrho{\rm div}u )\Lambda^{-s}\varrho\,dx
\lesssim \norm{\Lambda^{-s}(\varrho{\rm div}u )}_{L^2}\norm{\Lambda^{-s}\varrho}_{L^2}
\\&\lesssim \norm{\varrho{\rm div}u }_{L^\frac{1}{1/2+s/3}}\norm{\Lambda^{-s}\varrho}_{L^2}
\lesssim \norm{\varrho}_{L^{3/s}}\norm{\nabla u }_{L^2}\norm{\Lambda^{-s}\varrho}_{L^2}
\\&\lesssim\norm{ \varrho}_{L^2}^{s-1/2 }\norm{ \nabla\varrho}_{L^2}^{3/2-s}\norm{\nabla u}_{L^2}\norm{\Lambda^{-s}\varrho}_{L^2}.
\end{split}
\end{equation}
Similarly, we can bound the remaining terms by
\begin{eqnarray}
\label{1E_s_22}&& W_2=- \int_{\mathbb{R}^3}\Lambda^{-s}(u\cdot\nabla\varrho)\Lambda^{-s}\varrho\,dx \lesssim \norm{ u}_{L^2}^{s-1/2 }\norm{
\nabla u}_{L^2}^{3/2-s}\norm{\nabla \varrho}_{L^2}\norm{\Lambda^{-s}\varrho}_{L^2},
\\\label{1E_s_32}&&
W_3=-\int_{\mathbb{R}^3}\Lambda^{-s}\left(u\cdot\nabla u \right)\cdot\Lambda^{-s} u\,dx \lesssim \norm{ u}_{L^2}^{s-1/2 }\norm{ \nabla
u}_{L^2}^{3/2-s}\norm{\nabla u}_{L^2}\norm{\Lambda^{-s}u}_{L^2},
\\\nonumber&&
W_4=-\int_{\mathbb{R}^3}\Lambda^{-s}\left( h(\varrho)( \mu \Delta u+( \mu + \lambda )\nabla{\rm div} u) \right) \Lambda^{-s} u\,dx
\\\label{1E_s_42}&&\quad\ \,\lesssim   \norm{ \varrho}_{L^2}^{s-1/2 }\norm{ \nabla \varrho}_{L^2}^{3/2-s}\norm{\nabla^2 u }_{L^2}\norm{\Lambda^{-s}u}_{L^2},
\\\label{1E_s_52} &&W_5=-\int_{\mathbb{R}^3} \Lambda^{-s}\left( f(\varrho)\nabla\varrho\right)\cdot\Lambda^{-s} u\,dx \lesssim \norm{
\varrho}_{L^2}^{s-1/2 }\norm{ \nabla \varrho}_{L^2}^{3/2-s}\norm{\nabla \varrho}_{L^2}\norm{\Lambda^{-s}\varrho}_{L^2}.
\end{eqnarray}

Finally, we  turn to the left hand side of \eqref{1E_s_0}. For the second term, we have
\begin{equation}
\int_{\mathbb{R}^3} \mu |\nabla \Lambda^{-s}
u|^2+( \mu + \lambda )| {\rm div} \Lambda^{-s} u|^2\,dx -\int_{\mathbb{R}^3}\Lambda^{-s}\nabla\Phi\cdot \Lambda^{-s}u\,dx
\ge \sigma_0\norm{\nabla  \Lambda^{-s}
u}_{L^2}^2.
\end{equation}
While for the Poisson term, by the continuity equation $\eqref{1NS2}_1$ and the Poisson equation
$\eqref{1NS2}_3$ and the integration by parts, we get
\begin{equation}\label{1E_s_0poi}
\begin{split}
&-\int_{\mathbb{R}^3}\Lambda^{-s}\nabla\Phi\cdot \Lambda^{-s}u\,dx =\int_{\mathbb{R}^3}\Lambda^{-s}\Phi \Lambda^{-s}{\rm div }u\,dx
\\&\quad= \int_{\mathbb{R}^3}-\Lambda^{-s}\Phi \Lambda^{-s}\partial_t\varrho-\Lambda^{-s}\Phi \Lambda^{-s}{\rm div }(\varrho u)\,dx
\\&\quad= \int_{\mathbb{R}^3}-\Lambda^{-s}\Phi \Lambda^{-s}\partial_t\Delta\Phi+\Lambda^{-s}\nabla\Phi \cdot\Lambda^{-s}(\varrho u)\,dx
\\&\quad=\frac{1}{21}\frac{d}{dt}\int_{\mathbb{R}^3}|\Lambda^{-s}\nabla\Phi|^2\,dx+ \int_{\mathbb{R}^3}\Lambda^{-s}\nabla\Phi \cdot\Lambda^{-s}(\varrho u)\,dx.
\end{split}
\end{equation}
If $s\in (0,1/2]$, we use Lemma \ref{1Riesz} and Lemma \ref{1interpolation} to obtain
\begin{equation}\label{1E_s_0poi1}
\norm{\Lambda^{-s}(\varrho u)}_{L^2}\lesssim \norm{\varrho }_{L^2}\norm{u }_{L^{3/s}}\lesssim \norm{\varrho }_{L^2}\norm{\nabla u
}_{L^2}^{1/2-s}\norm{\nabla^2 u }_{L^2}^{1/2+s};
\end{equation}
and if $s\in (1/2,3/2)$, we have
\begin{equation}\label{1E_s_0poi2}
\norm{\Lambda^{-s}(\varrho u)}_{L^2}\lesssim \norm{\varrho }_{L^2}\norm{u }_{L^{3/s}}\lesssim \norm{\varrho }_{L^2}\norm{ u
}_{L^2}^{s-1/2}\norm{\nabla u }_{L^2}^{3/2-s};
\end{equation}

Consequently, in light of \eqref{1E_s_1}--\eqref{1E_s_0poi2}, we deduce \eqref{1E_s2} from \eqref{1E_s_0}.
\end{proof}

\section{Proof of Theorem \ref{1mainth}}

In this section, we shall combine all the energy estimates that we have derived in the previous two sections and the Sobolev interpolation to prove Theorem \ref{1mainth}.

We first close the energy estimates at each $\ell$-th level in our weak sense to prove \eqref{1HNbound}. Let $N\ge 3$ and $0\le\ell\le m-1$ with $1\le m\le N$. Summing up the estimates \eqref{1E_k} of Lemma \ref{1Ekle} for from $k=\ell$ to $m$, since $\sqrt{\mathcal{E}_0^3}\le \delta$ is small, we obtain
\begin{equation}\label{1proof1}
\begin{split}
&\frac{d}{dt}\sum_{\ell\le k\le m}\left( \norm{\nabla^{k}
\varrho}_{L^2}^2+\norm{\nabla^{k}u}_{L^2}^2+ \norm{\nabla^k\nabla\Phi}_{L^2}^2\right) +C_1\sum_{\ell+1\le k\le
m+1}\norm{\nabla^{k} u}_{L^2}^2
\\&\quad\le C_2 \delta\left(\sum_{\ell\le k\le m} \norm{\nabla^{k}\varrho}_{L^2}^2+\sum_{\ell+1\le k\le
m+1}\norm{\nabla^{k}\nabla\Phi}_{L^2}^2\right).
\end{split}
\end{equation}
Summing up the estimates \eqref{1E_kE_k+1} of Lemma \ref{1EkEk+1le} for from $k=\ell$ to $m-1$, we have
\begin{equation}\label{1proof2}
\begin{split}
&\frac{d}{dt}\sum_{\ell\le k\le m-1}\int_{\mathbb{R}^3}  \nabla^ku\cdot\nabla\nabla^k\varrho\,dx +C_3\left(\sum_{\ell\le k\le
m}\norm{\nabla^{k}\varrho}_{L^2}^2+\sum_{\ell+1\le k\le m+1}\norm{\nabla^{k}\nabla\Phi}_{L^2}^2\right)\\ &\quad\le C_4 \sum_{\ell+1\le k\le
m+1}\norm{\nabla^{k}u}_{L^2}^2.
\end{split}
\end{equation}
Multiplying \eqref{1proof2} by $2C_2\delta/C_3$, adding the resulting inequality with \eqref{1proof1}, since $\delta>0$ is small, we deduce that there exists a constant
$C_5>0$ such that for $0\le \ell\le m-1$,
\begin{equation}\label{1proof3}
\begin{split}
& \frac{d}{dt}\left\{\sum_{\ell\le k\le m} \left( \norm{\nabla^{k} \varrho}_{L^2}^2+\norm{\nabla^{k}
u}_{L^2}^2+ \norm{\nabla^k\nabla\Phi}_{L^2}^2\right) +\frac{2C_2\delta}{C_3}\sum_{\ell\le k\le m-1}\int_{\mathbb{R}^3}
\nabla^ku\cdot\nabla\nabla^k\varrho\,dx\right\}
\\&\quad+C_5\left\{\sum_{\ell\le k\le
m}\norm{\nabla^{k}\varrho}_{L^2}^2+\sum_{\ell+1\le k\le m+1}\norm{\nabla^{k}
u}_{L^2}^2+\sum_{\ell+1\le k\le m+1}\norm{\nabla^{k}\nabla\Phi}_{L^2}^2\right\}\le 0.
\end{split}
\end{equation}
We define $\mathcal{E}_\ell^m(t)$ to be $C_5^{-1}$ times the expression under the time derivative in \eqref{1proof3}. Observe that since
$\delta$ is small, $\mathcal{E}_\ell^m(t)$ is equivalent to $\norm{\nabla^\ell
\varrho(t)}_{H^{m-\ell}}^2+\norm{\nabla^\ell u(t)}_{H^{m-\ell}}^2+\norm{\nabla^\ell \nabla\Phi(t)}_{H^{m-\ell}}^2$.
Then we may write
\eqref{1proof3} as that for $0\le \ell\le m-1$,
\begin{equation}\label{1proof5}
\frac{d}{dt}\mathcal{E}_\ell^m+\norm{\nabla^{\ell}\varrho}_{H^{m-\ell}}^2+\norm{\nabla^{\ell+1}u}_{H^{m-\ell}}^2+\norm{\nabla^{\ell+1}\nabla\Phi}_{H^{m-\ell}}^2\le
0.
\end{equation}

Now taking $\ell=0$ and $m=3$ in \eqref{1proof5} and then integrating directly in time, we get
\begin{equation}\label{1proof6}
\begin{split}
\norm{ \varrho(t)}_{3}^2+\norm{ u(t)}_{H^{3}}^2+\norm{ \nabla\Phi(t)}_{H^{3}}^2 \lesssim \mathcal{E}_0^{3}(t) \le
\mathcal{E}_0^{3}(0) \lesssim \norm{ \varrho_0}_{3}^2+\norm{ u_0}_{H^{3}}^2+\norm{ \nabla\Phi_0}_{H^{3}}^2.
\end{split}
\end{equation}
By a standard continuity argument, this closes the a priori estimates \eqref{1a priori} if at the initial time we assume that $\norm{ \varrho_0}_{3}^2+\norm{
u_0}_{H^{3}}^2+\norm{ \nabla\Phi_0}_{H^{3}}^2\le \delta_0$ is sufficiently small. This in turn allows us to take $\ell=0$ and $m=N$ in
\eqref{1proof5}, and then integrate it directly in time to obtain  \eqref{1HNbound}.

Next, we turn to prove \eqref{1H-sbound}--\eqref{1decay2}. However, we are not able to prove them for all $s\in[0,3/2)$ at this moment. We shall first
prove them for $s\in [0,1/2]$.
\begin{proof}[Proof of \eqref{1H-sbound}--\eqref{1decay2} for {$s\in[0,1/2]  $}]
Define $\mathcal{E}_{-s}(t):=\norm{ \Lambda^{-s}\varrho(t)}_{L^2}^2+\norm{\Lambda^{-s} u(t)}_{L^2}^2+\norm{\Lambda^{-s} \nabla\Phi(t)}_{L^2}^2$. Then, integrating in time \eqref{1E_s}, by the bound  \eqref{1HNbound}, we obtain that for $s\in(0,1/2]$,
\begin{equation}\label{1-sin2}
\begin{split}
\mathcal{E}_{-s}(t)
&\le {\mathcal{E}_{-s}(0)}+C\int_0^t \left(\norm{\varrho}_{H^2}^2 +\norm{ \nabla u}_{H^1}^2 \right)\sqrt{\mathcal{E}_{-s}(\tau)} \,d\tau
\\&\le C_0\left(1+\sup_{0\le\tau\le t}\sqrt{\mathcal{E}_{-s}(\tau)}\right).
\end{split}
\end{equation}
This implies \eqref{1H-sbound} for $s\in[0,1/2]$, that is,
\begin{equation}\label{1proof8}
\norm{ \Lambda^{-s}\varrho(t)}_{L^2}^2+\norm{\Lambda^{-s} u(t)}_{L^2}^2+\norm{\Lambda^{-s} \nabla\Phi(t)}_{L^2}^2 \le C_0\ \hbox{ for }s\in[0,1/2].
\end{equation}
If $\ell=1,\dots, N-1$, we may use Lemma \ref{1-sinte} to  have
\begin{equation}\label{1proof10}
\norm{\nabla^{\ell+1} f}_{L^2}
\ge C  \norm{ \Lambda^{-s} f}_{L^2}^{-\frac{1}{\ell+s}}\norm{\nabla^\ell f}_{L^2}^{1+\frac{1}{\ell+s}}.
\end{equation}
By this fact and \eqref{1proof8}, we may find
\begin{equation}
\norm{\nabla^{\ell+1}u}_{L^2}^2+\norm{\nabla^{\ell+1}\nabla\Phi}_{L^2}^2
\ge C_0 \left(\norm{\nabla^{\ell}u}_{L^2}^2+\norm{\nabla^{\ell}\nabla\Phi}_{L^2}^2\right)^{1+\frac{1}{\ell+s}}.
\end{equation}
This together with \eqref{1HNbound} implies in particular that for $\ell=1,\dots,N-1$,
\begin{equation}\label{1inin}
\begin{split}
&\norm{\nabla^{\ell}\varrho}_{H^{N-\ell}}^2+\norm{\nabla^{\ell+1}u}_{H^{N-\ell-1}}^2+\norm{\nabla^{\ell+1}\nabla\Phi}_{H^{N-\ell-1}}^2
\\&\quad\ge C_0\left(\norm{\nabla^{\ell}\varrho}_{H^{N-\ell}}^2+\norm{\nabla^{\ell}u}_{H^{N-\ell}}^2+\norm{\nabla^{\ell}\nabla\Phi}_{H^{N-\ell}}^2\right)^{1+\frac{1}{\ell+s}}.
\end{split}
\end{equation}
Thus, we deduce from \eqref{1proof5} with $m=N$ the following time differential inequality
\begin{equation}\label{1proof11}
\frac{d}{dt}\mathcal{E}_\ell^N+C_0\left(\mathcal{E}_\ell^N\right)^{1+\frac{1}{\ell+s}}\le 0\ \hbox{ for }\ell=1,\dots,N-1.
\end{equation}
Solving this inequality directly gives
\begin{equation}\label{1proof12}
\mathcal{E}_\ell^N(t)\le C_0(1+t)^{-(\ell+s)}\ \hbox{ for }\ell=1,\dots,4N-1.
\end{equation}
This implies that for $s\in[0,1/2]$,
\begin{equation}\label{1proof13}
\norm{\nabla^\ell \varrho(t)}_{H^{N-\ell}}^2+\norm{\nabla^\ell u(t)}_{H^{N-\ell}}^2+\norm{\nabla^\ell \nabla\Phi(t)}_{H^{N+1-\ell}}^2\le
C_0(1+t)^{-(\ell+s)}\ \hbox{ for }\ell=1,\dots,N-1.
\end{equation}
On the other hand, since $\varrho={\rm div}\nabla\Phi$, we have
\begin{equation}\label{1proof132}
\norm{\nabla^\ell \varrho(t)}_{L^2}^2\le \norm{\nabla^{\ell+1} \nabla\Phi(t)}_{L^2}^2\le C_0(1+t)^{-(\ell+1+s)}\ \hbox{ for }\ell=0,\dots,N-2.
\end{equation}
Hence, by \eqref{1proof13}, \eqref{1proof132}, \eqref{1proof8} and the interpolation, we get
\eqref{1decay}--\eqref{1decay2} for $s\in[0,1/2]$.
\end{proof}

Now we can present the
\begin{proof}[Proof of \eqref{1H-sbound}--\eqref{1decay2} for {$s\in(1/2,3/2)$}]
Notice that the arguments for the case $s\in[0,1/2]$ can not be applied to this case. However, observing that we have $\varrho_0, u
_0,\nabla\Phi_0\in \dot{H}^{-1/2}$ since $\dot{H}^{-s}\cap L^2\subset\dot{H}^{-s'}$ for any $s'\in [0,s]$, we then deduce from what we have proved
for \eqref{1H-sbound}--\eqref{1decay2} with $s=1/2$ that the following decay result holds:
\begin{equation}\label{1proof14}
\norm{\nabla^\ell \varrho(t)}_{H^{N-\ell}}^2+\norm{\nabla^\ell u(t)}_{H^{N-\ell}}^2+\norm{\nabla^\ell \nabla\Phi(t)}_{H^{N+1-\ell}}^2 \le
C_0(1+t)^{-(\ell+1/2)}\ \hbox{ for }\ell=0,\dots, N-1.
\end{equation}
and
\begin{equation}\label{1proof1311}
\norm{\nabla^\ell \varrho(t)}_{L^2}^2\le  C_0(1+t)^{-(\ell+3/2)}\ \hbox{ for }\ell=0,\dots,N-2.
\end{equation}
Hence, by \eqref{1proof14}--\eqref{1proof1311}, we deduce from \eqref{1E_s2} that for $s\in(1/2,3/2)$,
\begin{equation}\label{1-sin2''}
\begin{split}
\mathcal{E}_{-s}(t)
&\le {\mathcal{E}_{-s}(0)}+C\int_0^t \norm{ (\varrho, u)}_{L^2}^{s-1/2}\left(\norm{\varrho}_{H^2}+\norm{\nabla u}_{H^1}\right)^{5/2-s}\sqrt{\mathcal{E}_{-s}(\tau)} \,d\tau
\\&\le C_0+ C_0\int_0^t(1+\tau)^{-(7/4-s/2)}\,d\tau\sup_{0\le\tau\le t}\sqrt{\mathcal{E}_{-s}(\tau)}
\\&\le C_0\left(1+\sup_{0\le\tau\le t}\sqrt{\mathcal{E}_{-s}(\tau)}\right).
\end{split}
\end{equation}
This implies \eqref{1H-sbound} for $s\in(1/2,3/2)$, that is,
\begin{equation}\label{1sss}
\norm{ \Lambda^{-s}\varrho(t)}_{L^2}^2+\norm{\Lambda^{-s} u(t)}_{L^2}^2+\norm{\Lambda^{-s} \nabla\Phi(t)}_{L^2}^2 \le C_0\ \hbox{ for }s\in(1/2,3/2).
\end{equation}

Now that we have proved \eqref{1sss}, we may repeat the arguments leading to \eqref{1decay}--\eqref{1decay2} for $s\in[0,1/2]$ to prove that they hold  also for $s\in(1/2,3/2)$.
\end{proof}

\appendix

\section{Analytic tools}\label{1section_appendix}

\subsection{Sobolev type inequalities}

We will extensively use the Sobolev interpolation of the Gagliardo-Nirenberg inequality.
 \begin{lemma}\label{1interpolation}
 Let $0\le m, \alpha\le \ell$, then we have
\begin{equation}
\norm{\nabla^\alpha f}_{L^p}\lesssim \norm{  \nabla^mf}_{L^q}^{1-\theta}\norm{ \nabla^\ell f}_{L^r}^{\theta}
\end{equation}
where $\alpha$ satisfies
\begin{equation}
\frac{\alpha}{3}-\frac{1}{p}=\left(\frac{m}{3}-\frac{1}{q}\right)(1-\theta)+\left(\frac{\ell}{3}-\frac{1}{r}\right)\theta.
\end{equation}
\begin{proof}
This is a special case of  \cite[pp. 125, THEOREM]{N1959}.
\end{proof}
\end{lemma}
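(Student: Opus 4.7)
The plan is to argue that the identity on $\alpha$ is dictated by scaling, reduce the statement to the classical Gagliardo--Nirenberg inequality ($m=0$ case), and then prove the latter by a dyadic Littlewood--Paley splitting with Bernstein's inequality, optimizing the cut-off. First I would test the claim on the dilation $f_\lambda(x):=f(\lambda x)$: since $\norm{\nabla^\beta f_\lambda}_{L^s}=\lambda^{\beta-3/s}\norm{\nabla^\beta f}_{L^s}$, comparing the power of $\lambda$ on both sides of the target inequality yields exactly the stated relation
\[
\tfrac{\alpha}{3}-\tfrac{1}{p}=\left(\tfrac{m}{3}-\tfrac{1}{q}\right)(1-\theta)+\left(\tfrac{\ell}{3}-\tfrac{1}{r}\right)\theta.
\]
This shows the identity is \emph{necessary} for a scale-invariant bound; the substance of the proof is to show it is also \emph{sufficient}. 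Along the way I would check that under the ordering $0\le m\le\alpha\le\ell$ (the relevant case in the paper), the value $\theta$ determined above lies in $[0,1]$, so that the interpolation actually makes sense.

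Next I would reduce to $m=0$. Let $g:=\nabla^m f$, viewed as a vector-valued function; then $\nabla^\alpha f=\nabla^{\alpha-m}g$ and $\nabla^\ell f=\nabla^{\ell-m}g$, so with $j:=\alpha-m$ and $k:=\ell-m$ the claim becomes the standard Gagliardo--Nirenberg inequality
\[
\norm{\nabla^{j}g}_{L^p}\lesssim \norm{g}_{L^q}^{1-\theta}\norm{\nabla^{k}g}_{L^r}^{\theta},\qquad 0\le j\le k,
\]
with the same $\theta$. This is the form I would actually prove.

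For the reduced statement I would use a Littlewood--Paley decomposition $g=\sum_{j\in\mathbb{Z}} P_j g$ where $P_j$ localizes Fourier frequencies to $|\xi|\sim 2^j$, together with Bernstein's inequalities
\[
\norm{\nabla^{\beta}P_j g}_{L^s}\sim 2^{j\beta}\norm{P_j g}_{L^s},\qquad \norm{P_j g}_{L^{s_2}}\lesssim 2^{3j(1/s_1-1/s_2)}\norm{P_j g}_{L^{s_1}}\ \text{ for }\ s_1\le s_2.
\]
Splitting $\sum_j 2^{j\alpha}\norm{P_j g}_{L^p}$ at a threshold $2^J$, the low-frequency half is controlled by the $L^q$--$L^p$ Bernstein inequality and the $L^q$ norm of $g$, while the high-frequency tail is controlled by the $L^r$--$L^p$ Bernstein together with $\norm{\nabla^k g}_{L^r}$. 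The scaling identity is precisely what makes the exponents in the two geometric series sum to opposite signs, and optimizing $J$ (choosing $2^J$ so that the two pieces balance) produces the sharp product $\norm{g}_{L^q}^{1-\theta}\norm{\nabla^k g}_{L^r}^{\theta}$.

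The main obstacle is endpoint behavior when one of $p,q,r$ equals $1$ or $\infty$, since the square-function characterization of $L^s$ used implicitly in the Littlewood--Paley step fails there; in those cases one falls back to Nirenberg's original integration-by-parts argument (iterating the one-derivative Landau--Kolmogorov inequality $\norm{\nabla u}_{L^p}^2\lesssim \norm{u}_{L^p}\norm{\nabla^2 u}_{L^p}$ and combining with Sobolev embedding). Since every application of this lemma in the paper involves $p,q,r\in(1,\infty)$, the Littlewood--Paley route is enough, and the general statement follows from \cite{N1959} as cited.
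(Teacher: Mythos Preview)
Your proposal goes well beyond the paper's own treatment, which simply cites Nirenberg's 1959 theorem and offers no argument whatsoever. The scaling check for necessity, the reduction to $m=0$ via $g=\nabla^m f$, and the dyadic Littlewood--Paley splitting with Bernstein's inequalities constitute the standard modern proof and are correct in the non-endpoint range $1<p,q,r<\infty$; this route is arguably more transparent than Nirenberg's original iterated one-dimensional integration-by-parts argument, and it makes the role of the index relation visible as the balancing condition between the two geometric series. One factual slip worth correcting: it is not true that every application in the paper avoids endpoints --- the lemma is invoked repeatedly with $p=\infty$, for instance in \eqref{1E_k_1_1} when bounding $\norm{\nabla^\ell\varrho}_{L^\infty}$, and again inside the proof of Lemma~\ref{infty} --- so your Littlewood--Paley argument alone does not cover all the uses made of it. Since you already anticipated this by pointing to Nirenberg's integration-by-parts route for the endpoint cases, the overall proposal stands; just drop the incorrect side remark about the paper's applications.
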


 Next, to estimate the $L^\infty$ norm of the spatial derivatives of $h$ and $f$ defined by \eqref{1hf}, we shall record the following estimate:
\begin{lemma}\label{infty}
Assume that $\norm{\varrho}_{L^\infty}\le 1$. Let $g(\varrho)$ be a smooth function of $\varrho$ with bounded derivatives of any order, then  for any integer $m\ge1$  we have
\begin{equation}
\norm{\nabla^m(g(\varrho))}_{L^\infty} \lesssim \norm{\nabla^{m}\varrho}_{L^\infty}.
\end{equation}
\end{lemma}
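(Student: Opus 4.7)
The plan is to expand $\nabla^m(g(\varrho))$ by the Faà di Bruno formula and then use pointwise bounds on the derivatives of $g$ together with an $L^\infty$ Gagliardo--Nirenberg interpolation to absorb the lower-order factors.

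First I would write, by Faà di Bruno,
\begin{equation*}
\nabla^m(g(\varrho)) \;=\; \sum_{k=1}^{m}\;\sum_{\substack{j_1+\cdots+j_k=m\\ j_i\ge 1}} C_{k,j_1,\dots,j_k}\, g^{(k)}(\varrho)\,\nabla^{j_1}\varrho\cdots\nabla^{j_k}\varrho,
\end{equation*}
where the inner sum is over ordered tuples of positive integers summing to $m$. Since $g$ has bounded derivatives of every order, $\lvert g^{(k)}(\varrho)\rvert \le C$ uniformly, so the task reduces to controlling each product $\nabla^{j_1}\varrho\cdots\nabla^{j_k}\varrho$ in $L^\infty$ by $\norm{\nabla^m\varrho}_{L^\infty}$.

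For each factor with $1\le j_i \le m$ I would invoke the $L^\infty$ Gagliardo--Nirenberg interpolation on $\mathbb{R}^3$,
\begin{equation*}
\norm{\nabla^{j_i}\varrho}_{L^\infty}\;\lesssim\;\norm{\varrho}_{L^\infty}^{1-j_i/m}\,\norm{\nabla^m\varrho}_{L^\infty}^{j_i/m},
\end{equation*}
(with the convention that the factor reduces to $\norm{\nabla^m\varrho}_{L^\infty}$ when $j_i=m$, i.e.\ $k=1$). Multiplying these estimates over $i=1,\dots,k$ and using $\sum j_i=m$ yields
\begin{equation*}
\norm{\nabla^{j_1}\varrho\cdots\nabla^{j_k}\varrho}_{L^\infty}\;\lesssim\;\norm{\varrho}_{L^\infty}^{k-1}\,\norm{\nabla^m\varrho}_{L^\infty}.
\end{equation*}
The hypothesis $\norm{\varrho}_{L^\infty}\le 1$ then gives $\norm{\varrho}_{L^\infty}^{k-1}\le 1$ for every $k\ge 1$, and summing over the finitely many multi-indices in the Faà di Bruno expansion produces the claimed bound $\norm{\nabla^m(g(\varrho))}_{L^\infty}\lesssim \norm{\nabla^m\varrho}_{L^\infty}$.

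There is no real obstacle here: the argument is essentially combinatorial bookkeeping on top of a standard interpolation inequality. The only place to be a little careful is the degenerate case $k=1$ (where the product has a single factor $\nabla^m\varrho$ and no interpolation is needed) and the use of $\norm{\varrho}_{L^\infty}\le 1$ to dispose of the $\norm{\varrho}_{L^\infty}^{k-1}$ factors, both of which are minor.
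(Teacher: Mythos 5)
Your proof is correct and follows essentially the same route as the paper: the paper also expands $\nabla^m(g(\varrho))$ into a sum of products $g^{(k)}(\varrho)\,\nabla^{\gamma_1}\varrho\cdots\nabla^{\gamma_n}\varrho$ (the Fa\`a di Bruno expansion, stated informally there), interpolates each factor as $\norm{\nabla^{\gamma_i}\varrho}_{L^\infty}\lesssim\norm{\varrho}_{L^\infty}^{1-\gamma_i/m}\norm{\nabla^m\varrho}_{L^\infty}^{\gamma_i/m}$ via its Lemma \ref{1interpolation}, and then uses $\norm{\varrho}_{L^\infty}\le 1$ to dispose of the factor $\norm{\varrho}_{L^\infty}^{n-1}$.
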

\begin{proof}
Notice that for $m\ge 1$,
\begin{equation}
\nabla^m(g(\varrho))=\hbox{ a sum of products }g^{\gamma_1,\dots,\gamma_n}(\varrho)\nabla^{\gamma_1}\varrho\cdots\nabla^{\gamma_n}\varrho,
\end{equation}
where the functions $g^{\gamma_1,\dots,\gamma_n}(\varrho)$ are some derivatives of $g(\varrho)$ and $1\le \gamma_i\le m,\ i=1,\dots,n$ with $\gamma_1+\cdots+\gamma_n=m$. We then use the Sobolev interpolation of Lemma \ref{1interpolation} to bound
\begin{equation}
\begin{split}
\norm{\nabla^m(g(\varrho))}_{L^\infty}
 &\lesssim
 \norm{\nabla^{\gamma_1}\varrho }_{L^\infty}\cdots \norm{ \nabla^{\gamma_n}\varrho}_{L^\infty}
 \\&\lesssim
 \norm{ \varrho}_{L^\infty}^{1-\gamma_1/m}\norm{\nabla^{m}\varrho}_{L^\infty}^{\gamma_1/m}\cdots\norm{ \varrho}_{L^\infty}^{1-\gamma_n/m}\norm{\nabla^{m}\varrho}_{L^\infty}^{\gamma_n/m}
 \lesssim\norm{\varrho}_{L^\infty}^{n-1}\norm{\nabla^m\varrho}_{L^\infty}.
\end{split}
\end{equation}
Hence, we conclude our lemma since $\norm{\varrho}_{L^\infty}\le 1$.
\end{proof}

We recall the following commutator estimate:
\begin{lemma}\label{1commutator}
Let $m\ge 1$ be an integer and  define the commutator
\begin{equation}\label{1commuta}
[\nabla^m,f]g=\nabla^m(fg)-f\nabla^mg.
\end{equation}
Then we have
\begin{equation}
\norm{[\nabla^m,f]g}_{L^p} \lesssim \norm{\nabla f}_{L^{p_1}}\norm{\nabla^{m-1}g}_{L^{p_2}}+\norm{\nabla^m f}_{L^{p_3}}\norm{ g}_{L^{p_4}},
\end{equation}
where $p,p_2,p_3\in(1,+\infty)$ and
\begin{equation}
\frac{1}{p}=\frac{1}{p_1}+\frac{1}{p_2}=\frac{1}{p_3}+\frac{1}{p_4}.
\end{equation}
\end{lemma}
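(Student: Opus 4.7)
The plan is to prove the commutator estimate via the (schematic) Leibniz rule followed by Hölder, Gagliardo--Nirenberg interpolation (Lemma \ref{1interpolation}), and Young's inequality on products. First I would write
\begin{equation*}
[\nabla^m,f]g = \nabla^m(fg)-f\nabla^m g = \sum_{k=1}^{m} c_{m,k}\,\nabla^k f \cdot \nabla^{m-k}g,
\end{equation*}
where the $k=0$ contribution cancels by definition and the $c_{m,k}$ are binomial-type coefficients (this is schematic in the sense that multi-indices are absorbed into the scalar notation). At the two endpoints, $k=1$ gives the factor $\nabla f \cdot \nabla^{m-1}g$, whose $L^p$ norm is $\le \norm{\nabla f}_{L^{p_1}}\norm{\nabla^{m-1}g}_{L^{p_2}}$ by Hölder since $1/p_1+1/p_2=1/p$, and $k=m$ gives $\nabla^m f\cdot g$, whose $L^p$ norm is $\le \norm{\nabla^m f}_{L^{p_3}}\norm{g}_{L^{p_4}}$.

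For the intermediate indices $2\le k\le m-1$, I would apply Hölder with exponents $a_k,b_k$ chosen so that $1/a_k+1/b_k=1/p$ and so that both factors can be interpolated by Lemma \ref{1interpolation} between the endpoint Lebesgue classes. Concretely I set the interpolation parameter to be $\theta_k=(m-k)/(m-1)\in[0,1]$ (so $\theta_1=1$, $\theta_m=0$) and demand
\begin{equation*}
\norm{\nabla^k f}_{L^{a_k}}\lesssim \norm{\nabla f}_{L^{p_1}}^{\theta_k}\norm{\nabla^m f}_{L^{p_3}}^{1-\theta_k},\quad
\norm{\nabla^{m-k}g}_{L^{b_k}}\lesssim \norm{\nabla^{m-1}g}_{L^{p_2}}^{\theta_k}\norm{g}_{L^{p_4}}^{1-\theta_k}.
\end{equation*}
The scaling identity required by Lemma \ref{1interpolation} determines $1/a_k$ and $1/b_k$ uniquely, and a direct addition of the two relations, combined with the hypothesis $1/p_1+1/p_2 = 1/p_3+1/p_4 = 1/p$, gives $1/a_k+1/b_k=1/p$ automatically, so Hölder is legitimate.

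Finally, combining Hölder with the two interpolation inequalities yields
\begin{equation*}
\norm{\nabla^k f\cdot\nabla^{m-k}g}_{L^p}\lesssim X^{\theta_k}Y^{1-\theta_k},
\end{equation*}
with $X=\norm{\nabla f}_{L^{p_1}}\norm{\nabla^{m-1}g}_{L^{p_2}}$ and $Y=\norm{\nabla^m f}_{L^{p_3}}\norm{g}_{L^{p_4}}$. An application of Young's inequality $X^{\theta_k}Y^{1-\theta_k}\le X+Y$ converts this to the desired bound, and summing in $k$ closes the estimate. The main technical point to verify is that the auxiliary exponents $a_k,b_k$ stay in the admissible range for Gagliardo--Nirenberg, namely strictly between $1$ and $\infty$; this is precisely what the hypothesis $p,p_2,p_3\in(1,\infty)$ guarantees, since the scaling equations above express $1/a_k$ and $1/b_k$ as convex combinations of $1/p_1,1/p_3$ and of $1/p_2,1/p_4$ respectively (up to the fixed $m/3-1/p$ offset), and these convex combinations avoid the endpoints as long as the named exponents do.
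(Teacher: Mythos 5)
Your proof is correct and fills in exactly the argument the paper gestures at: the paper's own ``proof'' is a one-line citation (to Lemma~\ref{1interpolation} for $p=p_2=p_3=2$ and to Ju's Lemma~3.1 for the general case), and the underlying mechanism in both is precisely your Leibniz expansion followed by H\"older, Gagliardo--Nirenberg interpolation of the intermediate factors, and Young's inequality $X^{\theta}Y^{1-\theta}\le X+Y$. The decomposition $\theta_k=(m-k)/(m-1)$ is the standard choice and makes the interpolation exponents $1/a_k,1/b_k$ honest convex combinations of $1/p_1,1/p_3$ and $1/p_2,1/p_4$ (no offset, despite the slightly garbled parenthetical at the end), so they stay in $(0,1)$ under the stated hypotheses and Gagliardo--Nirenberg applies with $\theta_k\in(0,1)$ strictly, avoiding the endpoint exceptions.
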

\begin{proof}
For $p=p_2=p_3=2$, it can be proved by using Lemma \ref{1interpolation}. For the general cases, one
may refer to \cite[Lemma 3.1]{J2004}
\end{proof}

\subsection{Negative Sobolev norms}

We define the operator $\Lambda^s, s\in \mathbb{R}$ by
\begin{equation}\label{1Lambdas}
\Lambda^s f(x)=\int_{\mathbb{R}^3}|\xi|^s\hat{f}(\xi)e^{2\pi ix\cdot\xi}\,d\xi,
\end{equation}
where $\hat{f}$ is the  Fourier transform of $f$. We define the homogeneous  Sobolev space
$\dot{H}^s$ of all $f$ for which $\norm{f}_{\dot{H}^s}$  is finite, where
\begin{equation}\label{1snorm}
\norm{f}_{\dot{H}^s}:=\norm{\Lambda^s f}_{L^2}=\norm{|\xi|^s \hat{f}}_{L^2}.
\end{equation}
We will use the non-positive index $s$. For convenience, we will change the index to be ``$-s$" with $s\ge 0$. We will employ the following special Sobolev interpolation:
\begin{lemma}\label{1-sinte}
Let $s\ge 0$ and $\ell\ge 0$, then we have
\begin{equation}\label{1-sinterpolation}
\norm{\nabla^\ell f}_{L^2}\le \norm{\nabla^{\ell+1} f}_{L^2}^{1-\theta}\norm{f}_{\Dot{H}^{-s}}^\theta, \hbox{ where }\theta=\frac{1}{\ell+1+s}.
\end{equation}
\end{lemma}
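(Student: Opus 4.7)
The statement is a standard interpolation inequality and the natural route is via the Fourier transform on $\mathbb{R}^3$. My plan is to pass to the Fourier side, where all three norms appearing in \eqref{1-sinterpolation} become weighted $L^2$ norms of $\hat f$ with the weights $|\xi|^{2\ell}$, $|\xi|^{2(\ell+1)}$, and $|\xi|^{-2s}$ respectively, and then apply H\"older's inequality with a carefully chosen exponent pair to recover the target weight as a product of the other two.

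Concretely, by Plancherel's theorem combined with \eqref{1Lambdas}--\eqref{1snorm}, it suffices to establish
\begin{equation*}
\int_{\mathbb{R}^{3}} |\xi|^{2\ell}|\hat f(\xi)|^{2}\,d\xi \le \left(\int_{\mathbb{R}^{3}} |\xi|^{2(\ell+1)}|\hat f(\xi)|^{2}\,d\xi\right)^{1-\theta}\left(\int_{\mathbb{R}^{3}} |\xi|^{-2s}|\hat f(\xi)|^{2}\,d\xi\right)^{\theta}.
\end{equation*}
The algebraic step that matches the exponents is the identity
\begin{equation*}
|\xi|^{2\ell} = \bigl(|\xi|^{2(\ell+1)}\bigr)^{1-\theta}\bigl(|\xi|^{-2s}\bigr)^{\theta},
\end{equation*}
which forces precisely $2\ell = 2(\ell+1)(1-\theta) - 2s\theta$, and solving this linear equation for $\theta$ gives $\theta = 1/(\ell+1+s)$, as required in the statement.

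With the weight factored in this way, I write
\begin{equation*}
|\xi|^{2\ell}|\hat f(\xi)|^{2} = \Bigl(|\xi|^{2(\ell+1)}|\hat f(\xi)|^{2}\Bigr)^{1-\theta}\Bigl(|\xi|^{-2s}|\hat f(\xi)|^{2}\Bigr)^{\theta},
\end{equation*}
and apply H\"older's inequality with conjugate exponents $p = 1/(1-\theta)$ and $q = 1/\theta$, which is legitimate since $\theta \in (0,1]$ whenever $\ell \ge 0$ and $s \ge 0$ (the endpoint $\theta = 1$ occurs only in the trivial case $\ell = s = 0$, for which the inequality is an equality). Taking square roots yields \eqref{1-sinterpolation}.

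I do not anticipate a real obstacle here: the only small point to verify is the exponent arithmetic $\theta = 1/(\ell+1+s)$ and the admissibility of the H\"older pair. No endpoint phenomenon arises because $|\xi|^{-2s}$ is locally integrable near the origin against $|\hat f|^{2}$ precisely under the hypothesis that $\norm{f}_{\dot H^{-s}}$ is finite, so the right-hand side is well defined whenever it appears on a meaningful inequality.
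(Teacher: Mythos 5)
Your proposal is correct and coincides with the paper's proof: both pass to the Fourier side via Plancherel, factor the weight $|\xi|^{2\ell}$ as $\bigl(|\xi|^{2(\ell+1)}\bigr)^{1-\theta}\bigl(|\xi|^{-2s}\bigr)^{\theta}$, and apply H\"older's inequality with conjugate exponents $1/(1-\theta)$ and $1/\theta$. The exponent arithmetic leading to $\theta=1/(\ell+1+s)$ is also verified correctly.
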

\begin{proof}
By the Parseval theorem, the definition of \eqref{1snorm} and H\"older's inequality, we have
\begin{equation}
\norm{\nabla^\ell f}_{L^2}
=\norm{|\xi|^\ell \hat{f}}_{L^2}\le  \norm{|\xi|^{\ell+1} \hat{f}}_{L^2}^{1-\theta}\norm{|\xi|^{-s} \hat{f}}_{L^2}^\theta
=\norm{\nabla^{\ell+1}f}_{L^2}^{1-\theta}\norm{ f}_{\Dot{H}^{-s}}^\theta.
\end{equation}
\end{proof}

If $s\in(0,3)$, $\Lambda^{-s}f$ defined by \eqref{1Lambdas} is the Riesz potential. The Hardy-Littlewood-Sobolev theorem implies the following $L^p$ type
inequality for the Riesz potential:
\begin{lemma}\label{1Riesz}
Let $0<s<3,\ 1<p<q<\infty,\ 1/q+s/3=1/p$, then
\begin{equation}\label{1Riesz es}
\norm{\Lambda^{-s}f}_{L^q}\lesssim\norm{ f}_{L^p}.
\end{equation}
\end{lemma}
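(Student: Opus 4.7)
My plan is to prove this as the classical Hardy--Littlewood--Sobolev inequality for the Riesz potential, which I will reduce to a pointwise bound (Hedberg's inequality) combined with the $L^p$ boundedness of the Hardy--Littlewood maximal operator. The argument is cleaner than a direct weak-type/Marcinkiewicz approach and uses exactly the hypothesis $p>1$ in the final step.

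First I would identify $\Lambda^{-s}f$ with a convolution. For $0<s<3$, a standard computation of the Fourier transform shows
\begin{equation*}
\Lambda^{-s}f(x) \;=\; c_{s}\int_{\mathbb{R}^{3}} \frac{f(y)}{|x-y|^{3-s}}\,dy
\end{equation*}
for an explicit constant $c_s>0$, since the distribution $|x|^{s-3}$ has Fourier transform a constant multiple of $|\xi|^{-s}$ (this is the Riesz composition / duality formula, valid precisely for $0<s<3$). All subsequent work is on this convolution integral, which I will write $I_s f(x)$.

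Next I would establish Hedberg's pointwise bound. For any $R>0$, split
\begin{equation*}
|I_s f(x)| \;\lesssim\; \int_{|x-y|\le R}\frac{|f(y)|}{|x-y|^{3-s}}\,dy \;+\; \int_{|x-y|>R}\frac{|f(y)|}{|x-y|^{3-s}}\,dy.
\end{equation*}
For the near part I dyadically decompose $\{|x-y|\le R\}$ into annuli $\{2^{-k-1}R<|x-y|\le 2^{-k}R\}$ and bound the integral over each annulus by $(2^{-k}R)^{s}\, M f(x)$, summing a geometric series in $k$ to get a bound of order $R^{s} Mf(x)$. For the far part I use H\"older with exponent $p'$: the integral is at most $\|f\|_{L^p}$ times $\bigl(\int_{|z|>R}|z|^{-(3-s)p'}dz\bigr)^{1/p'}$, and the latter integral converges exactly when $p'>3/(3-s)$, i.e.\ when $1/p<1-s/3$, which is the scaling condition $1/q=1/p-s/3>0$; the result is a bound of order $R^{s-3/p}\|f\|_{L^p}$. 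Optimizing in $R$ by setting $R^s M f(x)=R^{s-3/p}\|f\|_{L^p}$, I obtain
\begin{equation*}
|I_s f(x)| \;\lesssim\; \bigl(Mf(x)\bigr)^{p/q}\,\|f\|_{L^p}^{1-p/q},
\end{equation*}
using that $sp/3=1-p/q$ follows from the scaling relation.

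Finally I raise this pointwise estimate to the $q$-th power and integrate:
\begin{equation*}
\|\Lambda^{-s}f\|_{L^q}^{\,q} \;\lesssim\; \|f\|_{L^p}^{q-p}\int_{\mathbb{R}^{3}} \bigl(Mf(x)\bigr)^{p}\,dx \;\lesssim\; \|f\|_{L^p}^{q-p}\,\|f\|_{L^p}^{p} \;=\; \|f\|_{L^p}^{\,q},
\end{equation*}
where in the last step I invoke the Hardy--Littlewood maximal theorem, which requires precisely $p>1$. The main obstacle is the first step: justifying the convolution representation of $\Lambda^{-s}f$ (equivalently, computing the tempered distributional Fourier transform of $|x|^{s-3}$) requires some care with the class of admissible $f$; I would first prove the inequality for Schwartz $f$, where the identity is classical, and then extend to general $f\in L^p$ by density. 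The remaining ingredients (the dyadic annular decomposition, the H\"older computation whose convergence pins down the scaling $1/p-1/q=s/3$, and the maximal-function bound whose failure at $p=1$ pins down the restriction $p>1$) are all elementary once the representation is in hand.
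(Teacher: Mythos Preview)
Your proof is correct. The paper does not actually prove this lemma: it simply cites Stein, \emph{Singular Integrals and Differentiability Properties of Functions}, p.~119, Theorem~1. The argument in that reference proceeds differently from yours, via a weak-type $(1,q_\ast)$ estimate for the Riesz potential combined with Marcinkiewicz interpolation. Your route through Hedberg's pointwise inequality $|I_s f|\lesssim (Mf)^{p/q}\|f\|_{L^p}^{1-p/q}$ and the $L^p$ boundedness of the maximal function is an equally standard and arguably cleaner alternative: it avoids interpolation entirely and makes the role of the hypothesis $p>1$ completely transparent (it enters only through the maximal theorem). Both approaches use the scaling relation $1/p-1/q=s/3$ at the same point, to guarantee convergence of the far-field integral.
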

\begin{proof}
See \cite[pp. 119, Theorem 1]{S}.
\end{proof}

\end{document}